\documentclass[11pt,draft,amsmath,amscd,amsbsy,amssymb]{amsart}
\usepackage{amsmath,amsthm,latexsym,amscd,amsbsy,amssymb}
\setlength{\textwidth}{5.8in}

 \relax



\chardef\bslash=`\\ 

\makeatletter \def\verbatim{\interlinepenalty\@M \@verbatim
\leftskip\@totalleftmargin\advance\leftskip2pc
\frenchspacing\@vobeyspaces \@xverbatim} \makeatother \hfuzz1pc

\makeatletter \def\dgt@k{\dg@DX=-3 \dg@DY=2 \dg@SIZE=3}
\makeatother

\makeatletter \def\dgt@kk{\dg@DX=3 \dg@DY=-1 \dg@SIZE=3}

\theoremstyle{plain} \newtheorem{thm}{Theorem}[section]
\theoremstyle{plain} \newtheorem{theorem}{Theorem}[section]
\newtheorem{cor}[thm]{Corollary}
\newtheorem{lemma}[thm]{Lemma}
\newtheorem{prop}[thm]{Proposition}

\theoremstyle{definition} \newtheorem{rem}[thm]{Remark}

\newtheorem{defin}[thm]{Definition}


\usepackage[all]{xy}

\begin{document}

\title[Spaces of max-min measures]
{Spaces of max-min measures \\ on compact Hausdorff spaces}

\author[V. Brydun]{Viktoriya Brydun}
\address{Department of Mechanics and Mathematics,
Lviv National University, Universytetska Str. 1, 79000 Lviv, Ukraine}

\email{v\_frider@yahoo.com}

\author[M. Zarichnyi]{Mykhailo Zarichnyi}
\address{Faculty of Mathematics and Natural Sciences,
University of Rzesz\'ow, 1 Prof. St. Pigo\'n Street
35-310, Rze\-sz\'ow, Poland}

\email{zarichnyi@yahoo.com}

\thanks{The authors are indebted to the referees for their important remarks.}
\subjclass[2010]{28A33, 46E27}

\keywords{Max-min measure, max-plus measure, compact Hausdorff space, monad}


\begin{abstract} The notion of max-min measure is a counterpart of the notion of max-plus measure (Maslov measure or idempotent measure). In this paper we consider the spaces of max-min measures on the compact Hausdorff spaces. It is proved that the obtained functor of max-min measures is isomorphic to the functor of max-plus (idempotent) measures considered by the second-named author. However, it turns out that the monads generated by these functors are not isomorphic.
\end{abstract}

\maketitle

\section{Introduction}

The non-additive measures find their applications in different parts of mathematics as well as in mathematical economics, image processing, fractal geometry, optimization etc. Some classes of non-additive measures (in particular, Maslov measures) belong to the idempotent mathematics \cite{KN}. Recall that the latter is a part of mathematics in which the ordinary arithmetic operations are replaced by idempotent ones (see, e.g., \cite{KN,LM}). According to the Correspondence Principle \cite{LM}, to every interesting notion or result of ordinary mathematics there corresponds an interesting notion or result of the idempotent mathematics.

By the Riesz-Markov-Kakutani representation theorem, there is a one-to-one correspondence between the normed positive linear functionals on $C(X)$ and the regular Borel probability measures in $X$, where $X$ is a compact Hausdorff space. In \cite{Z} the second-named author considered an idempotent counterpart of the probability measures, namely, the functor of idempotent measures in the category $\mathbf{Comp}$ of compact Hausdorff spaces and continuous maps. In particular, it was proved in \cite{Z} that this functor is open (i.e, preserves the class of open maps) and generates a monad in the category $\mathbf{Comp}$. It is also proved that the functors of idempotent measures and probability measures are not isomorphic.

The present paper is devoted to another class of measures in Idempotent Mathematics, namely, the class of the max-min measures. In \cite{CRZ}, the max-min measures of finite and compact support are considered on the ultrametric spaces. The definition of the max-min measure is essentially that of the Sugeno integral with respect to a non-additive (idempotent) measure \cite{Su}. Using the term ``max-min measure'' seems to be an abusing of the terminology. We follow the same term in the present paper, in which we develop the theory of max-min measures for the class of compact Hausdorff spaces.

One of the main results of the paper is that the functors max-plus measures and max-min measures are isomorphic.
Some unexpectedness of this isomorphism lies in a  substantial difference between the max-plus and max-min measures:
the latter are not defined to be continuous (as functionals on the suitable Banach space of continuous functions) and establishing
their continuity is not an easy procedure. To prove the existence of this  isomorphism we consider a construction inspired by the notion
of density of an idempotent measure. Actually, this leads to an alternative description of the spaces of idempotent measures and, similarly, of max-min measures.

The mentioned isomorphism allows to claim the normality (in the sense of E. Shchepin \cite{S}) of the functor of max-plus measures.

Similarly to the functor of max-plus measures, the functor of max-min measures  naturally generates a monad in the category $\mathbf{Comp}$. However, it turns out that the monads generated by these functors are not isomorphic.

The monad structure allows to establish some connections between the max-plus measures and max-plus convex sets (\cite{Z}; see, e.g., \cite{CGQS} for the backgrounds of the max-plus convexity).
\section{Preliminaries}
A space is a topological space. All maps are assumed to be continuous unless it is explicitly indicated that the continuity of the map considered
requires  verification. By $\mathrm{Cl}\, A$ (resp. $\mathrm{Int}\, A$) we denote the closure (resp. interior) of a set $A$ in a topological space.

Recall that a space is called zero-dimensional if there is a base of its topology consisting of sets that are simultaneously open and closed.

We endow $[-\infty,\infty]=\mathbb R\cup\{-\infty,\infty\}$ with the order topology.

Given a topological space $X$, by $C(X)$ we denote the Banach space of continuous real-valued functions on $X$ (with respect to the $\sup$-norm). If $\varphi,\psi\in C(X)$, by $\varphi\vee \psi$ (respectively $\varphi\wedge \psi$) we denote the pointwise maximum (respectively minimum) of $\varphi$ and $\psi$. If $c\in \mathbb R$, by $c\wedge\varphi$ the pointwise minimum of $\varphi$ and $c$ is denoted.

 For every $c\in \mathbb R$, we will denote by $c_X$ (or even by $c$ if this does not cause any difficulties) the function on $X$ identically equal to $c$.

\begin{defin} Let $X$ be a compact Hausdorff space. A functional $\mu\colon C(X)\to\mathbb R$ is called a {\em max-min measure} if the following are satisfied:
\begin{enumerate}
\item $\mu(c_X)=c$ for every $c\in\mathbb R$;
\item $\mu(\varphi\vee \psi)=\mu(\varphi)\vee \mu(\psi)$ for every $\varphi,\psi\in C(X)$;
\item $\mu(c\wedge\varphi)=c\wedge\mu(\varphi)$ for every $c\in\mathbb R$ and  every $\varphi\in C(X)$.
\end{enumerate}
\end{defin}

A consequence of (2) is that $\mu(\varphi)\le\mu(\psi)$ whenever $\varphi\le\psi$.

Note that we do not require that  $\mu$ in this definition be continuous. It turns out that the continuity is a consequence of the other properties and we will establish this successively.

We denote by $J(X)$ the set of all max-min measures on $X$. Note that, for every $x\in X$ the Dirac measure $\delta_x$ is an example of a max-min measure. Given $x_1,\dots,x_n\in X$ and $\lambda_1,\dots,\lambda_n\in\mathbb R\cup\{\infty\}$ with $\vee_{i=1}^n\lambda_i=\infty$, define $\mu=\vee_{i=1}^n\lambda_i\wedge \delta_{x_i}\colon C(X)\to\mathbb R$ as follows:

\begin{equation}\label{100} \mu(\varphi)=\vee^n_{i=1}\lambda_i\wedge \varphi(x_i).\end{equation}

\begin{prop} $\mu$ is a max-min measure.

\end{prop}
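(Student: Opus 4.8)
The plan is to verify the three defining axioms of a max-min measure directly from the formula \eqref{100}, relying throughout on the fact that $([-\infty,\infty],\vee,\wedge)$ is a distributive lattice, so that $\wedge$ distributes over $\vee$ and conversely. Before checking the axioms I would first confirm that $\mu$ is genuinely real-valued: for $\varphi\in C(X)$ each term $\lambda_i\wedge\varphi(x_i)$ is finite (it equals $\varphi(x_i)\in\mathbb R$ when $\lambda_i=\infty$, and lies in $\mathbb R$ otherwise since $\lambda_i>-\infty$ and $\varphi(x_i)\in\mathbb R$), so the finite join $\mu(\varphi)$ is a real number.

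For axiom (1) I would compute $\mu(c_X)=\vee_{i=1}^n(\lambda_i\wedge c)$. Each summand is at most $c$, while the hypothesis $\vee_{i=1}^n\lambda_i=\infty$ forces some index $j$ with $\lambda_j=\infty$ (a join of finitely many elements is attained), and for that index $\lambda_j\wedge c=c$. Hence the join is exactly $c$. This is the single place where the normalization condition $\vee_i\lambda_i=\infty$ is genuinely used.

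Axioms (2) and (3) are then pure lattice identities. For (2) I would expand $\mu(\varphi\vee\psi)=\vee_{i=1}^n\lambda_i\wedge\bigl(\varphi(x_i)\vee\psi(x_i)\bigr)$, distribute $\wedge$ over $\vee$ inside each term, and regroup by commutativity and associativity of $\vee$ to split the join into $\bigl(\vee_i\lambda_i\wedge\varphi(x_i)\bigr)\vee\bigl(\vee_i\lambda_i\wedge\psi(x_i)\bigr)=\mu(\varphi)\vee\mu(\psi)$. For (3) I would expand $\mu(c\wedge\varphi)=\vee_{i=1}^n\bigl(\lambda_i\wedge c\wedge\varphi(x_i)\bigr)$ and pull the constant $c$ out of the join, again via distributivity of $\wedge$ over $\vee$, obtaining $c\wedge\bigl(\vee_i\lambda_i\wedge\varphi(x_i)\bigr)=c\wedge\mu(\varphi)$.

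I do not expect a real obstacle here: the entire content is the distributive-lattice structure of the extended reals together with associativity and commutativity of the join. The only points worth flagging explicitly are the role of $\vee_i\lambda_i=\infty$ in axiom (1), and the implicit use that finite joins are attained (the argument for (1) would fail for an infinite family of $\lambda_i$'s whose supremum is $\infty$ but not attained).
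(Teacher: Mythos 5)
Your proposal is correct and follows essentially the same route as the paper: direct verification of the three axioms from formula \eqref{100} using the distributive-lattice identities of $([-\infty,\infty],\vee,\wedge)$, with the normalization $\vee_i\lambda_i=\infty$ entering only in axiom (1). (Your closing caveat about attainment is harmless but not actually needed, since $\vee_i(\lambda_i\wedge c)=(\vee_i\lambda_i)\wedge c$ holds for arbitrary families in a linearly ordered complete lattice; in any case it does not affect the finite situation at hand.)
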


\begin{proof} We consider the first max-min measure condition  $$\mu(c_X)=\vee^n_{i=1}\lambda_i\wedge c_X(x_i)=\vee^n_{i=1}\lambda_i\wedge c=c.$$

Since $$\vee^n_{i=1}\lambda_i\wedge(\lambda\wedge\varphi(x_i))=\vee^n_{i=1}\lambda_i\wedge\lambda\wedge\varphi(x_i)=
\lambda\wedge(\vee^n_{i=1}\lambda_i\wedge\varphi(x_i)),$$ then $$\vee^n_{i=1}(\lambda_i\wedge\delta_{x_i})(\lambda\wedge\varphi)=\lambda\wedge(\vee^n_{i=1}(\lambda_i\wedge\delta_{x_i})(\varphi)).$$ It follows that $\mu(\lambda\wedge\varphi)=\lambda\wedge\mu(\varphi).$

Finally,
\begin{align*}\vee^n_{i=1}(\lambda_i\wedge\delta_{x_i})(\varphi\vee\psi)=&\vee^n_{i=1}(\lambda_i\wedge(\varphi\vee\psi)(x_i))=
\vee^n_{i=1}((\lambda_i\wedge\varphi(x_i))\vee(\lambda_i\wedge\psi(x_i)))\\=&(\vee^n_{i=1}\lambda_i\wedge \varphi(x_i))\vee(\vee^n_{i=1}\lambda_i\wedge \psi(x_i))\end{align*} and
$$(\vee^n_{i=1}(\lambda_i\wedge\delta_{x_i})(\varphi))\vee(\vee^n_{i=1}(\lambda_i\wedge\delta_{x_i})(\psi))=(\vee^n_{i=1}\lambda_i\wedge \varphi(x_i))\vee(\vee^n_{i=1}\lambda_i\wedge \psi(x_i)),$$ whence $\mu(\varphi\vee\psi)=\mu(\varphi)\vee\mu(\psi).$
\end{proof}
We endow the set $J(X)$ with the weak* topology. A base of this topology consists of the sets of the form $$O(\mu;\varphi_1,\dots,\varphi_n;\varepsilon)=\{\nu\in J(X)\mid |\mu(\varphi_i)-\nu(\varphi_i)|<\varepsilon,\ i=1,\dots,n\},$$
where $\mu\in J(X)$, $\varphi_1,\dots,\varphi_n\in C(X)$, and $\varepsilon>0$.

Denote by $\iota\colon J(X)\to \prod_{\varphi\in C(X)}\mathbb R_{\varphi}$ (here $\mathbb R_{\varphi}$ is a copy of $\mathbb R$) a map defined as follows: $$\iota(\mu)=(\mu(\varphi))_{\varphi\in C(X)}, \ \mu\in J(X).$$

\begin{prop}\label{p:iota} The map $\iota$ is an embedding and its image lies in the compact set $$\prod_{\varphi\in C(X)}[-\|\varphi\|, \|\varphi\|]\subset \prod_{\varphi\in C(X)}\mathbb R_{\varphi}.$$
\end{prop}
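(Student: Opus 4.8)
The plan is to verify the two assertions separately: the range bound, which rests on the monotonicity of max-min measures, and the embedding property, which is essentially forced by the way the weak* topology is defined.

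First I would establish the range bound. Fix $\mu\in J(X)$ and $\varphi\in C(X)$, and set $c=\|\varphi\|$. Since $-c\le\varphi(x)\le c$ for every $x\in X$, we have the pointwise inequalities $(-c)_X\le\varphi\le c_X$. Applying the monotonicity of $\mu$ (the consequence of axiom (2) recorded above) together with axiom (1), we obtain
$$-c=\mu((-c)_X)\le\mu(\varphi)\le\mu(c_X)=c.$$
Hence $\mu(\varphi)\in[-\|\varphi\|,\|\varphi\|]$ for every $\varphi$, so $\iota(\mu)$ lies in the indicated product. That product is compact by the Tychonoff theorem, being a product of compact intervals.

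Next I would show that $\iota$ is an embedding by checking injectivity, continuity, and openness onto the image. Injectivity is immediate: if $\iota(\mu)=\iota(\nu)$ then $\mu(\varphi)=\nu(\varphi)$ for all $\varphi\in C(X)$, i.e.\ $\mu=\nu$ as functionals. For continuity, note that for each fixed $\varphi$ the composition $\pi_\varphi\circ\iota\colon J(X)\to\mathbb{R}_\varphi$ is exactly the evaluation $\mu\mapsto\mu(\varphi)$, which is continuous for the weak* topology (the basic sets $O(\mu;\varphi;\varepsilon)$ are precisely the preimages of the $\varepsilon$-balls); continuity of $\iota$ then follows from the universal property of the product topology.

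Finally, for openness onto the image I would show that each basic weak* neighborhood is carried to a relatively open set. Given a basic set $O(\mu;\varphi_1,\dots,\varphi_n;\varepsilon)$, consider the basic product-open set $U=\{\,t=(t_\varphi)\mid |t_{\varphi_i}-\mu(\varphi_i)|<\varepsilon,\ i=1,\dots,n\,\}$, which constrains only the finitely many coordinates $\varphi_1,\dots,\varphi_n$. A direct check of the defining inequalities shows $\iota(O(\mu;\varphi_1,\dots,\varphi_n;\varepsilon))=U\cap\iota(J(X))$, so the image of every basic open set is relatively open; thus $\iota^{-1}$ is continuous and $\iota$ is a homeomorphism onto its image. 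I do not anticipate a genuine obstacle here: both the weak* topology and the subspace topology from the product are the initial topology induced by the evaluation maps $\mu\mapsto\mu(\varphi)$, so the embedding claim is almost tautological, and the only input beyond formal manipulation is the monotonicity used for the range bound.
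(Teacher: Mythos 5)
Your proof is correct and follows essentially the same route as the paper: the range bound via monotonicity (a consequence of axiom (2)) together with $\mu(c_X)=c$, and the embedding from the observation that the weak* topology is the initial topology induced by the evaluations $\mu\mapsto\mu(\varphi)$. The paper simply asserts the embedding part as immediate from the definition of the weak* topology, whereas you spell out injectivity, continuity, and openness onto the image; the content is the same.
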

\begin{proof} The fact that $\iota$ is an embedding immediately follows from the definition of the weak* topology. Let $\varphi\in C(X)$. Since $-\|\varphi\|\le\|\varphi\|$, we see that $$-\|\varphi\|=\mu(-\|\varphi\|)\le\mu(\varphi)\le\mu(\|\varphi\|)=\|\varphi\|,$$ for every $\mu\in J(X)$. Therefore, $\iota(\mu)\in \prod_{\varphi\in C(X)}[-\|\varphi\|, \|\varphi\|]$, for every $\mu\in J(X)$.
\end{proof}

In the sequel, we identify $J(X)$ with its image $\iota(J(X))$. Also, we regard every $x=(x_\varphi)_{\varphi\in C(X)}$ as a functional on $C(X)$, $x(\varphi)=x_\varphi$, $\varphi\in C(X)$.

\begin{prop} Let $X=\{x_1,\dots,x_n\}$.  Every max-min measure $\mu\in J(X)$ can be represented by formula (\ref{100}) for suitable $\lambda_1,\dots,\lambda_n$.
\end{prop}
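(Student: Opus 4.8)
The plan is to use the identification $C(X)\cong\mathbb R^n$ given by $\varphi\mapsto(\varphi(x_1),\dots,\varphi(x_n))$ and to reconstruct the weights intrinsically from $\mu$. Writing $a_i=\varphi(x_i)$ throughout, I first record the tools: the monotonicity noted after the definition, the dual rule $\mu(c\vee\varphi)=c\vee\mu(\varphi)$ (immediate from (2) and (1) with $\psi=c_X$), and property (3). For $i\in\{1,\dots,n\}$ and reals $c\le B$ I introduce the \emph{spike} function $\rho_i^{B,c}\in C(X)$ with $\rho_i^{B,c}(x_i)=B$ and $\rho_i^{B,c}(x_j)=c$ for $j\ne i$. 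The decisive elementary identity is $\bigvee_{i=1}^n\rho_i^{B,c}=B_X$, so that (2) gives $\bigvee_{i=1}^n\mu(\rho_i^{B,c})=B$ for every $c\le B$.

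Next I would define the candidate weights. Since $c\mapsto\mu(\rho_i^{B,c})$ is nondecreasing with values in $[c,B]$, the limit $L_i(B):=\lim_{c\to-\infty}\mu(\rho_i^{B,c})$ exists in $[-\infty,B]$, and I set
\[
\lambda_i:=\sup_{B\in\mathbb R}L_i(B)\in[-\infty,\infty].
\]
Indices with $\lambda_i=-\infty$ simply drop out of the maximum in (\ref{100}), so they are harmless; the Dirac measures on a two-point space already show that such indices cannot be excluded a priori. Passing to the limit $c\to-\infty$ in $\bigvee_i\mu(\rho_i^{B,c})=B$ (a finite maximum commutes with the limit) yields $\max_i L_i(B)=B$, and letting $B\to\infty$ forces $\max_i\lambda_i=\infty$, which is the required normalization.

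For the representation itself I would prove the two inequalities separately. The exact decomposition $\varphi=\bigvee_{i=1}^n\varphi_i$, where $\varphi_i$ agrees with $\varphi$ at $x_i$ and equals $m:=\min_j a_j$ elsewhere, together with (2) gives $\mu(\varphi)=\bigvee_i\mu(\varphi_i)$. For the lower bound, note that for $B\ge a_i$ and $c\le m$ one has $\varphi\ge\rho_i^{a_i,c}=a_i\wedge\rho_i^{B,c}$, so by monotonicity and (3) $\mu(\varphi)\ge a_i\wedge\mu(\rho_i^{B,c})$; letting $c\to-\infty$ and then taking the supremum over $B$ gives $\mu(\varphi)\ge a_i\wedge\lambda_i$, hence $\mu(\varphi)\ge\bigvee_i(a_i\wedge\lambda_i)$. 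For the upper bound I use $\varphi_i=a_i\wedge\rho_i^{B,m}$ ($B\ge a_i$) and (3) to write $\mu(\varphi_i)=a_i\wedge\mu(\rho_i^{B,m})$, and reduce everything to the single inequality
\[
\mu(\rho_i^{B,m})\le\lambda_i\vee\bigvee_{j\ne i}(\lambda_j\wedge m);
\]
indeed, since each cross term satisfies $\lambda_j\wedge m\le\lambda_j\wedge a_j$ (because $m\le a_j$), taking $a_i\wedge(\cdot)$ and then $\bigvee_i$ absorbs all off-diagonal contributions and leaves exactly $\bigvee_i(a_i\wedge\lambda_i)$.

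The hard part will be precisely this last inequality, which is a minimax/monotone-convergence interchange: it amounts to $\sup_B\inf_c\mu(\rho_i^{B,c})=\inf_c\sup_B\mu(\rho_i^{B,c})$, and it is the only place where a continuity-type input is really needed (consistent with the paper's remark that continuity must be established separately). I expect to settle it using the capping identity $\mu(\rho_i^{t,m})=t\wedge\mu(\rho_i^{B,m})$ for $m\le t\le B$, which follows from (3) since $\rho_i^{t,m}=t\wedge\rho_i^{B,m}$, combined with the max-identity $\max_k\mu(\rho_k^{B,m})=B$ and a pigeonhole argument selecting, for each $B$, an index attaining this maximum. Running $B\to\infty$ and $m\to-\infty$ through these finitely many indices should pin the off-diagonal contributions at $\lambda_j\wedge m$ and the diagonal contribution at $\lambda_i$, thereby closing the gap between the two inequalities and completing the proof.
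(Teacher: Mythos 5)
Your construction of the weights is essentially the paper's: your spike functions $\rho_i^{B,c}$ are its $\varphi_i^{a,b}$, and your $\lambda_i=\sup_B\lim_{c\to-\infty}\mu(\rho_i^{B,c})$ is exactly its double limit. The normalization $\bigvee_i\lambda_i=\infty$ and the lower bound $\mu(\varphi)\ge\bigvee_i(a_i\wedge\lambda_i)$ are correctly argued. The problem is the upper bound: you reduce it to the inequality $\mu(\rho_i^{B,m})\le\lambda_i\vee\bigvee_{j\ne i}(\lambda_j\wedge m)$ and then explicitly leave it unproved (``I expect to settle it\dots should pin\dots''). That is a genuine gap, and the sketch you give for closing it does not cohere: $m=\min_j\varphi(x_j)$ is fixed by $\varphi$, so ``running $m\to-\infty$'' is not available, and the pigeonhole/minimax interchange is never actually carried out. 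You have also misdiagnosed where the difficulty lives --- no continuity-type input is needed in the finite case; the obstacle is an artifact of freezing the off-diagonal level at $c=m$ before taking any limit.

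The fix (and the paper's route) is to note that the decomposition is exact for \emph{every} $c\le m$ and $B\ge\max_j a_j$: one has $\varphi=\bigvee_i\bigl(a_i\wedge\rho_i^{B,c}\bigr)$ pointwise, hence $\mu(\varphi)=\bigvee_i a_i\wedge\mu(\rho_i^{B,c})$ identically in $(B,c)$. Since the left side does not depend on $(B,c)$ and the right side is a finite maximum of monotone expressions composed with the continuous maps $t\mapsto a_i\wedge t$, one may pass to the limits $c\to-\infty$ and then $B\to\infty$ inside the maximum, obtaining $\mu(\varphi)=\bigvee_i a_i\wedge\lambda_i$ as an equality --- both of your inequalities at once, with no minimax interchange. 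Alternatively, within your own two-sided scheme, the upper bound follows from $\rho_i^{B,m}=\rho_i^{B,c}\vee m_X$, which by property (2) gives $\mu(\rho_i^{B,m})=\mu(\rho_i^{B,c})\vee m\to L_i(B)\vee m\le\lambda_i\vee m$ as $c\to-\infty$; the extra term $m$ is then absorbed because $\lambda_{j^*}=\infty$ for some $j^*$, so $\bigvee_j(a_j\wedge\lambda_j)\ge a_{j^*}\ge m$. Either repair completes your argument; as written, it is incomplete.
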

\begin{proof} We assume that $x_j\neq x_j$, whenever $i\neq j$. Given $a,b\in \mathbb R$, define $\varphi_i^{a,b}\in C(X)$ by the conditions:  $\varphi_i^{a,b}(x_j)= a$, whenever $j=i$, and $\varphi_i^{a,b}(x_j)= b$ otherwise. Note that $\varphi_i^{a,b}\le\varphi_i^{a',b}$, whenever $a\le a'$, and $\varphi_i^{a,b}\le\varphi_i^{a,b'}$, whenever $b\le b'$.

Let $$\lambda_i^a=\lim_{b\to-\infty}\mu(\varphi_i^{a,b})\in [-\infty,\infty),\ \lambda_i=\lim_{a\to\infty}\lambda^a_i\in[-\infty,\infty].$$

Now let $\varphi\in C(X)$ and let $b\le\min\varphi\le\max\varphi\le a$. Then, clearly, $\varphi(x)=\vee_{i=1}^n\varphi(x_i)\wedge \varphi_i^{a,b}(x)$ and we obtain $\mu(\varphi)=\vee_{i=1}^n\varphi(x_i)\wedge \mu(\varphi_i)$.

Then $$\mu(\varphi)=\lim_{b\to-\infty}\mu(\varphi)=\vee_{i=1}^n\varphi(x_i)\wedge \lim_{b\to-\infty}\mu(\varphi_i^{a,b})=\vee_{i=1}^n\varphi(x_i)\wedge\lambda^a_i$$
and therefore
$$\mu(\varphi)=\lim_{a\to\infty}\mu(\varphi)=\vee_{i=1}^n\varphi(x_i)\wedge \lim_{a\to\infty}\lambda^a_i=\vee_{i=1}^n\varphi(x_i)\wedge\lambda_i.$$

\end{proof}

\begin{cor}\label{c:ineq} For every finite discrete space $X$, every max-min measure $\mu\in J(X)$, and every $c>0$ the inequalities  \begin{equation}\mu(\varphi+c_X)\le\mu(\varphi)+c,\ \mu(\varphi)-c\le \mu(\varphi-c_X)  {\label{101}} \end{equation} hold. \end{cor}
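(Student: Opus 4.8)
The plan is to use the finite representation $\mu=\vee_{i=1}^n \lambda_i\wedge\delta_{x_i}$ guaranteed by the previous proposition, so that $\mu(\varphi)=\vee_{i=1}^n \lambda_i\wedge\varphi(x_i)$, and then reduce the two desired inequalities to elementary pointwise estimates involving $\wedge$, $\vee$, and the addition of a constant $c>0$. The key structural fact I will exploit is the interaction between truncation (the $\wedge$ operation) and translation by a constant: for real numbers $a,\lambda$ one always has $(\lambda\wedge a)+c \ge \lambda\wedge(a+c)$, because adding $c$ to the smaller of $\lambda,a$ can only fall short of, or equal, truncating the already-shifted value $a+c$ at the same threshold $\lambda$.

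First I would prove the left-hand inequality $\mu(\varphi+c_X)\le\mu(\varphi)+c$. Writing both sides through the representation, the left side is $\vee_{i=1}^n \lambda_i\wedge(\varphi(x_i)+c)$ and the right side is $\bigl(\vee_{i=1}^n \lambda_i\wedge\varphi(x_i)\bigr)+c=\vee_{i=1}^n\bigl(\lambda_i\wedge\varphi(x_i)+c\bigr)$, where the last equality holds because adding a constant distributes over a finite supremum. It therefore suffices to check the pointwise inequality $\lambda_i\wedge(\varphi(x_i)+c)\le(\lambda_i\wedge\varphi(x_i))+c$ for each $i$, and to take the supremum over $i$; this pointwise inequality is exactly the elementary estimate noted above. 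The second inequality $\mu(\varphi)-c\le\mu(\varphi-c_X)$ is then obtained by applying the first inequality to the function $\psi=\varphi-c_X$, which yields $\mu(\varphi)=\mu(\psi+c_X)\le\mu(\psi)+c=\mu(\varphi-c_X)+c$, and rearranging.

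The main subtlety, rather than an obstacle, is the bookkeeping when some $\lambda_i$ take the value $\infty$ (as permitted in the representation): one must confirm that the arithmetic $\lambda\wedge(a+c)\le(\lambda\wedge a)+c$ and the distribution of $+c$ over $\vee$ still hold in the extended reals $[-\infty,\infty]$, which they do since $c$ is a genuine finite real and the only troublesome terms are handled by the convention that $\infty\wedge t=t$. I would remark that the finiteness of $c$ is essential, and that the argument is uniform in $\mu$ precisely because it rests only on the pointwise structure of the representing data $\lambda_1,\dots,\lambda_n$ and the points $x_1,\dots,x_n$.
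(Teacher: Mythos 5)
Your proposal is correct and follows essentially the same route as the paper: represent $\mu=\vee_{i=1}^n\lambda_i\wedge\delta_{x_i}$ via the preceding proposition, reduce the first inequality to the termwise estimate $\lambda_i\wedge(\varphi(x_i)+c)\le(\lambda_i\wedge\varphi(x_i))+c$ together with distribution of $+c$ over the finite supremum, and deduce the second inequality from the first by substituting $\varphi-c_X$. Your explicit check of the $\lambda_i=\infty$ case is a welcome bit of care that the paper leaves implicit.
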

\begin{proof}
 There exist $\lambda_1,\dots,\lambda_n\in\mathbb R\cup\{\infty\}$ with $\vee_{i=1}^n\lambda_i=\infty$ and such that $\mu=\vee_{i=1}^n\lambda_i\wedge \delta_{x_i}$. Then $$\mu(\varphi+c_X)=\vee^n_{i=1}\lambda_i\wedge(\varphi+c_X)(x_i)=\vee^n_{i=1}\lambda_i\wedge(\varphi(x_i)+c)\le\vee^n_{i=1}(\lambda_i\wedge\varphi(x_i)+c)=\mu(\varphi)+c.$$
 The second inequality is an easy consequence of the first one.
\end{proof}

\begin{cor} For every finite discrete space $X$,  every max-min measure $\mu\in J(X)$ is a continuous map on $C(X)$.
\end{cor}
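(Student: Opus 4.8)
The plan is to show that every $\mu\in J(X)$ is in fact nonexpansive (1-Lipschitz) with respect to the sup-norm on $C(X)$, since this is strictly stronger than continuity and follows directly from the two inequalities already at our disposal. Concretely, I would aim to prove that
$$|\mu(\varphi)-\mu(\psi)|\le\|\varphi-\psi\|\qquad\text{for all }\varphi,\psi\in C(X),$$
from which continuity is immediate.

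To establish this bound, fix $\varphi,\psi\in C(X)$ and set $c=\|\varphi-\psi\|$. If $c=0$ then $\varphi=\psi$ and there is nothing to prove, so assume $c>0$. From $\psi-\varphi\le\|\varphi-\psi\|=c$ we get the pointwise inequality $\psi\le\varphi+c_X$, and the monotonicity of $\mu$ (the consequence of condition (2) noted after the definition) yields $\mu(\psi)\le\mu(\varphi+c_X)$. Applying the first inequality of Corollary~\ref{c:ineq}, namely $\mu(\varphi+c_X)\le\mu(\varphi)+c$, gives $\mu(\psi)\le\mu(\varphi)+c$. Interchanging the roles of $\varphi$ and $\psi$ (here $\|\psi-\varphi\|=c$ as well) produces $\mu(\varphi)\le\mu(\psi)+c$, and combining the two estimates yields $|\mu(\varphi)-\mu(\psi)|\le c=\|\varphi-\psi\|$, as desired.

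I do not expect a genuine obstacle at this stage: the real work has already been carried out in obtaining the finite representation $\mu=\vee_{i=1}^n\lambda_i\wedge\delta_{x_i}$ and the resulting inequalities of Corollary~\ref{c:ineq}, which are exactly the ingredients that convert the abstract axioms into a quantitative Lipschitz estimate. The only point requiring a moment's care is the restriction $c>0$ in Corollary~\ref{c:ineq}, which is handled by treating the trivial case $\varphi=\psi$ separately. Since a 1-Lipschitz map between metric spaces is continuous, the continuity of $\mu$ on $C(X)$ follows at once.
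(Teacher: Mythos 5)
Your proof is correct and takes essentially the same route as the paper: both arguments combine the monotonicity of $\mu$ with the inequalities of Corollary~\ref{c:ineq} to show that $\|\varphi-\psi\|\le c$ forces $\mu(\psi)\in[\mu(\varphi)-c,\mu(\varphi)+c]$, i.e.\ that $\mu$ is $1$-Lipschitz. The only cosmetic difference is that you reuse the first inequality with the roles of $\varphi$ and $\psi$ swapped, whereas the paper invokes both inequalities of Corollary~\ref{c:ineq} directly; these are equivalent, since the second inequality is itself derived from the first.
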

\begin{proof} Let $c>0$. If $\|\varphi-\psi\|\le c$, then $\mu(\psi)\in [\mu(\varphi)-c, \mu(\varphi)+c]$ and the statement follows.
\end{proof}

\begin{prop}\label{p:closed} The set $J(X)$ is closed in the space $\prod_{\varphi\in C(X)}\mathbb R_{\varphi}$.
\end{prop}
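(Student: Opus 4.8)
The plan is to exhibit $J(X)$ as an intersection of closed subsets of the product $P=\prod_{\varphi\in C(X)}\mathbb R_{\varphi}$. Recall that the product (Tychonoff) topology is the topology of coordinatewise convergence, so that each projection $\pi_{\varphi}\colon P\to\mathbb R_{\varphi}$, $\pi_{\varphi}(x)=x_{\varphi}$, is continuous, and that under the identification fixed above a point $x\in P$ is viewed as the functional $\varphi\mapsto\pi_{\varphi}(x)$. Since $J(X)$ is precisely the set of those functionals satisfying conditions (1)--(3) of the definition of a max-min measure, it suffices to observe that each of these conditions, for fixed data, cuts out a closed subset of $P$.

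First, for each $c\in\mathbb R$, condition (1) defines $A_c=\{x\in P\mid \pi_{c_X}(x)=c\}=\pi_{c_X}^{-1}(\{c\})$, which is closed as the preimage of a point under the continuous map $\pi_{c_X}$. Next, for each pair $\varphi,\psi\in C(X)$, condition (2) defines $B_{\varphi,\psi}=\{x\in P\mid \pi_{\varphi\vee\psi}(x)=\pi_{\varphi}(x)\vee\pi_{\psi}(x)\}$. Since the binary maximum $\vee\colon\mathbb R\times\mathbb R\to\mathbb R$ is continuous, both $x\mapsto\pi_{\varphi\vee\psi}(x)$ and $x\mapsto\pi_{\varphi}(x)\vee\pi_{\psi}(x)$ are continuous real-valued maps on $P$, and $B_{\varphi,\psi}$ is their equalizer, hence closed. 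Similarly, for each $c\in\mathbb R$ and $\varphi\in C(X)$, condition (3) defines the equalizer $C_{c,\varphi}=\{x\in P\mid \pi_{c\wedge\varphi}(x)=c\wedge\pi_{\varphi}(x)\}$, which is closed by the continuity of $\wedge\colon\mathbb R\times\mathbb R\to\mathbb R$.

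Finally, I would conclude that $J(X)=\bigcap_{c}A_c\cap\bigcap_{\varphi,\psi}B_{\varphi,\psi}\cap\bigcap_{c,\varphi}C_{c,\varphi}$ is an intersection of closed sets and is therefore closed in $P$. I expect no serious obstacle here: the argument is essentially a formality, the only points to keep in mind being that each coordinate space is a copy of $\mathbb R$ (so the limiting functional is automatically real-valued, with no escape to $\pm\infty$), and that the lattice operations $\vee$ and $\wedge$ on $\mathbb R$ are continuous, which is exactly what turns each of the defining algebraic identities into a closed condition depending on only finitely many coordinates. If one prefers a net formulation, the same observations show that whenever a net in $J(X)$ converges coordinatewise to some $\nu\in P$, passing to the limit in (1)--(3) and invoking the continuity of $\vee$ and $\wedge$ yields that $\nu$ again satisfies (1)--(3), so $\nu\in J(X)$.
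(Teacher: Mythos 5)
Your proof is correct and is essentially the paper's argument in dual form: the paper shows the complement is open by producing, for each functional violating one of conditions (1)--(3), an explicit basic neighborhood $O(\mu;\dots;\varepsilon)$ missing $J(X)$, which is exactly the content of your observation that each condition is a closed (equalizer) constraint involving finitely many coordinates and the continuous operations $\vee$, $\wedge$. No gap; the two write-ups differ only in whether the closedness is packaged as an intersection of equalizers or as explicit separating neighborhoods.
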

\begin{proof} Suppose that $\mu\in \left(\prod_{\varphi\in C(X)}\mathbb R_{\varphi}\right)\setminus J(X)$.

1) If there is $c\in\mathbb R$ such that $\mu(c)\neq c$, then $O(\mu; c_X; |c-\mu(c)|)$ is a neighborhood of $\mu$ that misses $J(X)$.

2) If $\mu(\varphi\vee \psi)\neq \mu(\varphi)\vee\mu(\psi)$, then $$O\left(\mu;\varphi,\psi, \varphi\vee \psi; \frac{|\mu(\varphi\vee \psi)- (\mu(\varphi)\vee\mu(\psi))|}{2}\right)$$ is a neighborhood of $\mu$ that misses $J(X)$.

3) If $\mu(c\wedge\varphi)\neq c\wedge\mu(\varphi)$, then $$O\left(\mu;\varphi,c\wedge\varphi;  \frac{|\mu(c\wedge\varphi)-(c\wedge\mu(\varphi))|}{2}\right)$$ is a neighborhood of $\mu$ that misses $J(X)$.
\end{proof}
\begin{cor} For every compact Hausdorff space $X$, the space $J(X)$ is compact.
\end{cor}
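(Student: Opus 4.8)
The plan is to realize $J(X)$ as a closed subset of a compact Hausdorff space and then invoke the standard fact that a closed subspace of a compact space is compact. Essentially all the substantive work has already been carried out in Propositions \ref{p:iota} and \ref{p:closed}, so this final step amounts to assembling those two results.

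First I would recall from Proposition \ref{p:iota} that, under the identification of $J(X)$ with its image $\iota(J(X))$, we have the inclusion
$$J(X)\subseteq K:=\prod_{\varphi\in C(X)}[-\|\varphi\|,\|\varphi\|]\subseteq\prod_{\varphi\in C(X)}\mathbb R_{\varphi}.$$
Each factor $[-\|\varphi\|,\|\varphi\|]$ is a compact interval, so by Tychonoff's theorem the product $K$ is compact. Being a subspace of the Hausdorff product $\prod_{\varphi\in C(X)}\mathbb R_{\varphi}$, it is moreover Hausdorff.

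Next I would invoke Proposition \ref{p:closed}, which asserts that $J(X)$ is closed in the full product $\prod_{\varphi\in C(X)}\mathbb R_{\varphi}$. Since $K$ carries the subspace topology inherited from this product, the set $J(X)=J(X)\cap K$ is closed in $K$ as well. A closed subspace of a compact space is compact, and therefore $J(X)$ is compact (and Hausdorff).

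I do not expect any genuine obstacle at this stage: the two preparatory propositions supply precisely the boundedness (Proposition \ref{p:iota}) and the closedness (Proposition \ref{p:closed}) that are needed, and the conclusion then follows formally from Tychonoff's theorem together with the permanence of compactness under passage to closed subsets. The only point deserving a word of care is that closedness was established in the \emph{whole} product rather than in $K$; this is harmless, because intersecting a closed set with the subspace $K$ keeps it closed in $K$.
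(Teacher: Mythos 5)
Your argument is correct and follows the same route as the paper: combine Proposition \ref{p:iota} (boundedness of the image of $\iota$) with Proposition \ref{p:closed} (closedness in the product) and conclude via Tychonoff and the fact that closed subsets of compact spaces are compact. Your extra remark about closedness in the full product restricting to closedness in $K$ is a sensible clarification but does not change the substance.
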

\begin{proof} Indeed, by Propositions \ref{p:iota} and \ref{p:closed} the space $JX)$ can be embedded as a closed subset in the compact Hausdorff space $\prod_{\varphi\in C(X)}[-\|\varphi\|, \|\varphi\|]$ and therefore is compact Hausdorff as well.

\end{proof}

 Let $f\colon X\to Y$ be a continuous map of compact Hausdorff spaces. Given $\mu\in I(X)$, define a map $J(f)(\mu)\colon C(Y)\to\mathbb R$ as follows: $J(f)(\mu)(\varphi)=\mu (\varphi f)$, $\varphi\in C(Y)$.

\begin{prop} Let $f\colon X\to Y$ be a continuous map of compact Hausdorff spaces. Then $J(f)(\mu)\in J(Y)$, for every $\mu\in I(X)$. The obtained map $J(f)\colon J(X)\to J(Y)$ is continuous.
\end{prop}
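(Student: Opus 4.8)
The plan is to split the argument into the two assertions of the statement: first that $J(f)(\mu)$ is a genuine max-min measure on $Y$ whenever $\mu\in J(X)$, and second that the resulting map $J(f)\colon J(X)\to J(Y)$ is continuous with respect to the weak* topologies. Throughout I write $\varphi f$ for the composition $\varphi\circ f$, which lies in $C(X)$ since it is a composition of continuous maps.

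For the first part, I would verify conditions (1)--(3) of the definition directly, relying on the single observation that precomposition by $f$ commutes with all the relevant lattice operations. Explicitly, for $\varphi,\psi\in C(Y)$ and $c\in\mathbb R$ one has the pointwise identities $c_Y f=c_X$, $(\varphi\vee\psi)f=(\varphi f)\vee(\psi f)$, and $(c\wedge\varphi)f=c\wedge(\varphi f)$, all immediate because $f$ acts only on the argument. Using that $\mu$ is a max-min measure, I would then compute $J(f)(\mu)(c_Y)=\mu(c_X)=c$; next $J(f)(\mu)(\varphi\vee\psi)=\mu((\varphi f)\vee(\psi f))=\mu(\varphi f)\vee\mu(\psi f)=J(f)(\mu)(\varphi)\vee J(f)(\mu)(\psi)$; and finally $J(f)(\mu)(c\wedge\varphi)=\mu(c\wedge(\varphi f))=c\wedge\mu(\varphi f)=c\wedge J(f)(\mu)(\varphi)$. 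This establishes conditions (1)--(3), so $J(f)(\mu)\in J(Y)$.

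For the second part, I would invoke the defining (initial) property of the weak* topology: since $J(Y)$ is identified with a subspace of the product $\prod_{\psi\in C(Y)}\mathbb R_\psi$, a map into $J(Y)$ is continuous precisely when its composition with each coordinate projection $\nu\mapsto\nu(\psi)$ is continuous. The composite $\mu\mapsto J(f)(\mu)(\psi)=\mu(\psi f)$ is exactly the coordinate projection of $J(X)\subset\prod_{\chi\in C(X)}\mathbb R_\chi$ at the coordinate $\chi=\psi f\in C(X)$, and such projections are continuous by the very definition of the weak* topology. Hence $J(f)$ is continuous. Equivalently, one may check by hand that the preimage of a basic neighbourhood $O(\nu;\psi_1,\dots,\psi_n;\varepsilon)$ is cut out by the open conditions $|\mu(\psi_i f)-\nu(\psi_i)|<\varepsilon$.

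I expect no genuine obstacle here: the argument is essentially formal, and in particular it does not require the (as yet unestablished) continuity of the individual functionals $\mu$. The only point needing minor care is the bookkeeping of the three lattice identities under precomposition by $f$, together with the remark that continuity of $J(f)$ reduces to the coordinatewise statement. The real content is simply that the max-min structure is preserved under pullback of test functions along $f$.
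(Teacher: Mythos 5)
Your proposal is correct and follows essentially the same route as the paper: the membership $J(f)(\mu)\in J(Y)$ is checked by pulling the three lattice identities back along $f$, and continuity is the formal observation that each coordinate $\mu\mapsto\mu(\psi f)$ of $J(f)$ is a coordinate projection of the weak* (product) topology, which is exactly the content of the paper's inclusion $J(f)(O(\mu;\psi_1 f,\dots,\psi_n f;\varepsilon))\subset O(J(f)(\mu);\psi_1,\dots,\psi_n;\varepsilon)$. No gaps.
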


\begin{proof}  Let $\mu \in J(X)$ and $\varphi, \psi \in C(Y).$ Clearly, $J(f)(\mu)(c_X)=c,\ c\in \mathbb{R},$ and $$J(f)(\mu)(\lambda\wedge \varphi)=\mu((\lambda\wedge\varphi)f)=\lambda \wedge \mu(\varphi f)=\lambda \wedge J(f)(\mu)(\varphi).$$ We have also $$J(f)(\mu)(\varphi\vee \psi)=\mu((\varphi\vee \psi)f)=\mu(\varphi f\vee \psi f)=\mu(\varphi f)\vee \mu(\psi f)= J(f)(\mu)(\varphi)\vee J(f)(\mu)(\psi).$$

Thus, $J(f)(\mu)\in J(Y)$. Therefore, we obtain a map $J(f)\colon J(X)\to J(Y)$.

Let $\mu\in J(X)$, $\psi_1,\dots,\psi_k\in C(Y)$, and $\varepsilon>0$. Then $O(J(f)(\mu);\psi_1,\dots,\psi_n;\varepsilon) $ is a base neighborhood of $J(f)(\mu)$.   Since $$J(f)(\mu)(O(\mu;\psi_1f,\dots,\psi_nf;\varepsilon))\subset O(J(f)(\mu);\psi_1,\dots,\psi_n;\varepsilon) ,$$ we conclude that the map $J(f)$ is continuous.

\end{proof}

It is easy to check that  $J$  is a functor in the category $\mathbf{Comp}$ of compact Hausdorff spaces and continuous maps.

Note that the functor $J$ preserves the class of embeddings. In the sequel, given a closed subspace $A$ of $X$, we will identify the space $J(A)$ with the subspace $J(\iota)((J(A))$ of $J(X)$, where $\iota\colon A\to X$ denotes the inclusion map.

\begin{prop}\label{p:zero} For every zero-dimensional compact  space $X$, for every max-min measure $\mu\in J(X)$, and every $c>0$ the inequalities (\ref{101}) hold.
\end{prop}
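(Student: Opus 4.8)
The plan is to bootstrap from the finite discrete case (Corollary \ref{c:ineq}) to the zero-dimensional case by means of locally constant approximations, crucially without invoking any continuity of $\mu$, which is not yet available. As in Corollary \ref{c:ineq}, it suffices to prove the first inequality $\mu(\varphi+c_X)\le\mu(\varphi)+c$, since the second follows by applying the first to $\varphi-c_X$. Throughout I will use only the monotonicity of $\mu$ (the consequence of axiom (2) recorded after the definition) together with the functoriality of $J$.

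First I would settle the inequality for locally constant functions. If $\eta\in C(X)$ factors as $\eta=\tilde\eta\circ f$ through a continuous surjection $f\colon X\to F$ onto a finite discrete space $F$, with $\tilde\eta\in C(F)$, then for every $d>0$ we have $\eta+d_X=(\tilde\eta+d_F)\circ f$, hence $\mu(\eta+d_X)=J(f)(\mu)(\tilde\eta+d_F)$. Since $J(f)(\mu)\in J(F)$ and $F$ is finite discrete, Corollary \ref{c:ineq} yields $J(f)(\mu)(\tilde\eta+d_F)\le J(f)(\mu)(\tilde\eta)+d=\mu(\eta)+d$. Thus $\mu(\eta+d_X)\le\mu(\eta)+d$ for every locally constant $\eta$ and every $d>0$.

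Next I would exploit zero-dimensionality to approximate an arbitrary $\varphi\in C(X)$ from below by a locally constant function. Given $\eps>0$, each point of $X$ has a clopen neighborhood on which $\varphi$ oscillates by less than $\eps$ (continuity furnishes an open such neighborhood, and zero-dimensionality a clopen one inside it); compactness extracts a finite subcover, which can be disjointified into a clopen partition $X=V_1\sqcup\dots\sqcup V_k$. Setting $\psi$ equal to $\inf_{V_j}\varphi$ on each $V_j$ produces a locally constant function with $\psi\le\varphi\le\psi+\eps_X$, and $\psi$ factors through the map $f\colon X\to\{1,\dots,k\}$ sending $V_j$ to $j$.

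Finally I would chain the estimates. Using monotonicity for the outer steps and the locally constant inequality (applied to the locally constant functions $\psi+\eps_X$ and $\psi$) for the inner ones, I get
\begin{align*}
\mu(\varphi+c_X)&\le\mu(\psi+\eps_X+c_X)=\mu((\psi+\eps_X)+c_X)\le\mu(\psi+\eps_X)+c\\
&\le\mu(\psi)+\eps+c\le\mu(\varphi)+\eps+c.
\end{align*}
As $\eps>0$ is arbitrary, this gives $\mu(\varphi+c_X)\le\mu(\varphi)+c$, completing the first inequality and hence the proposition. The one point requiring care — and the reason the argument is not entirely routine — is that the additive error $\eps$ introduced by the approximation must be absorbed using monotonicity alone; there is no recourse to continuity of $\mu$ to pass to a limit, so the sandwiching has to be arranged so that $\eps$ occurs only as an additive constant that can be sent to $0$ directly.
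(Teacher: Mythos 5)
Your proof is correct and follows essentially the same route as the paper's: approximate $\varphi$ from below by a locally constant function, push $\mu$ forward to the finite quotient, invoke the finite discrete case (Corollary \ref{c:ineq}), and sandwich with monotonicity. The only difference is cosmetic — the paper phrases the argument as a proof by contradiction with a gap $r$, whereas you derive the bound directly and let the approximation error $\eps$ tend to $0$.
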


\begin{proof}
Now let $X$ be a zero-dimensional space. Suppose that $\mu(\varphi+c_X)>\mu(\varphi)+c $, for some $\mu\in J(X)$ and some $c>0$. Then there is $r>0$ such that $\mu(\varphi+c_X)>\mu(\varphi)+c +r$.

There exists a finite disjoint open cover $\mathcal U$ of $X$ and a function $\psi\in C(X)$ such that
\begin{enumerate}
\item $\psi$ is constant on every element of $\mathcal U$;
\item $\psi\le\varphi\le\psi+(r/2)_X$.
\end{enumerate}
Let $Y=X/\mathcal U$ and let $q\colon X\to Y$ be the quotient map. There is  $\psi'\in C(Y)$ such that $\psi=\psi'q$.

Note that, for every $a>0$,
\begin{align*} \mu(\psi+a_X)=&\mu(\psi'q+a_X)=\mu((\psi'+a_Y)q)=J(q)(\mu)(\psi'+a_Y)\\
\le&J(q)(\mu)(\psi')+a=\mu(\psi'q)+a=\mu(\psi)+a.
\end{align*}

Therefore,
$$\mu(\varphi)+c +r< \mu(\varphi+c_X)\le \mu(\psi+c_X+(r/2)_X)\le \mu(\psi)+c+r/2\le\mu(\varphi)+c+r/2$$
and we obtain a contradiction.

The second inequality is a consequence of the first one (see the proof of Corollary \ref{c:ineq}).

\end{proof}

\begin{cor} Let $X$ be a zero-dimensional compact space and $\mu\in J(X)$. Then $\mu\colon C(X)\to \mathbb R$ is continuous.
\end{cor}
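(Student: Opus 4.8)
The plan is to imitate the proof of the earlier corollary on finite discrete spaces, now that Proposition \ref{p:zero} supplies the inequalities (\ref{101}) in the zero-dimensional setting. The key observation is that these two one-sided estimates, combined with the monotonicity of $\mu$, force $\mu$ to be nonexpansive with respect to the sup-norm, which immediately gives continuity.

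Fix $\varphi\in C(X)$ and $c>0$. First I would observe that if $\psi\in C(X)$ satisfies $\|\varphi-\psi\|\le c$, then pointwise $\varphi-c_X\le\psi\le\varphi+c_X$. Applying the monotonicity of $\mu$ (the consequence of axiom (2) recorded just after the definition) yields the sandwich $\mu(\varphi-c_X)\le\mu(\psi)\le\mu(\varphi+c_X)$.

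Next I would invoke both inequalities of (\ref{101}) from Proposition \ref{p:zero}: the first gives $\mu(\varphi+c_X)\le\mu(\varphi)+c$ and the second gives $\mu(\varphi)-c\le\mu(\varphi-c_X)$. Chaining these with the sandwich from the previous step produces
$$\mu(\varphi)-c\le\mu(\psi)\le\mu(\varphi)+c,$$
that is, $|\mu(\varphi)-\mu(\psi)|\le c$. Since $c>0$ and $\varphi$ are arbitrary, this shows $\mu$ is $1$-Lipschitz, hence continuous.

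I do not expect any real obstacle here: the substantive work — passing through a finite quotient $X/\mathcal U$ to reduce the estimate to the already-settled finite discrete case — was carried out in Proposition \ref{p:zero}. The present corollary merely translates those two inequalities into a genuine modulus-of-continuity statement, so the only care needed is to keep the direction of each inequality straight.
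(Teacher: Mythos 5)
Your argument is correct and is exactly the one the paper intends: the paper leaves this corollary without a written proof precisely because it is the same two-line deduction used for the finite discrete case (Corollary \ref{c:ineq} and the corollary following it), now powered by Proposition \ref{p:zero}. Combining monotonicity of $\mu$ with the two inequalities (\ref{101}) to get $|\mu(\varphi)-\mu(\psi)|\le\|\varphi-\psi\|$ is precisely the intended route, and your bookkeeping of the inequality directions is right.
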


Let $S=\{X_{\alpha},p_{\alpha \beta};\mathcal{A}\}$ be an inverse system over a directed set $\mathcal{A}$.  (See, e.g., \cite{S} for the necessary information concerning inverse systems in the category $\mathbf{Comp}$.) For any $\alpha \in \mathcal{A},$ let $p_{\alpha}\colon X=\lim\limits_{\longleftarrow} S\to X_{\alpha}$ denote the limit projection. By $J(S)$ we denote the inverse system $\{J(X_{\alpha}),J(p_{\alpha \beta});\mathcal{A}\}$.

\begin{prop} \label{p:homeo} Let $X$ be a zero-dimensional compact space and $X=\varprojlim\{X_\alpha,p_{\alpha\beta};\mathcal A\}$, where $\mathcal A$ is a directed set.  Then the natural map $h=(J(p_{\alpha}))_{\alpha \in \mathcal{A}}\colon J(X)\to \lim\limits_{\longleftarrow} J(S)$ is a homeomorphism.
\end{prop}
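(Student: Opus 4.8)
The plan is to show that $h$ is a continuous bijection between compact Hausdorff spaces, and hence a homeomorphism. First note that $h$ really lands in $\varprojlim J(S)$: since $J$ is a functor and $p_{\alpha\beta}p_\beta=p_\alpha$ whenever $\alpha\le\beta$, functoriality gives $J(p_{\alpha\beta})(J(p_\beta)(\mu))=J(p_\alpha)(\mu)$, so $h(\mu)=(J(p_\alpha)(\mu))_{\alpha}$ is a thread. Moreover $h$ is continuous, because each coordinate $J(p_\alpha)$ is continuous and $\varprojlim J(S)$ carries the topology induced by the projections. As $J(X)$ is compact and $\varprojlim J(S)\subseteq\prod_\alpha J(X_\alpha)$ is Hausdorff, it suffices to prove that $h$ is injective and surjective.

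The key tool is the set $A=\bigcup_{\alpha}\{\,\varphi p_\alpha\mid \varphi\in C(X_\alpha)\,\}\subseteq C(X)$. Using directedness of $\mathcal A$ (any two functions in $A$ can be written as pullbacks along a common $p_\gamma$), one checks that $A$ is a subalgebra containing the constants, and it separates the points of $X=\varprojlim S$; hence $A$ is dense in $C(X)$ by the Stone--Weierstrass theorem. Injectivity of $h$ is then immediate: if $h(\mu)=h(\nu)$, then $\mu(\varphi p_\alpha)=J(p_\alpha)(\mu)(\varphi)=J(p_\alpha)(\nu)(\varphi)=\nu(\varphi p_\alpha)$ for all $\alpha$ and all $\varphi\in C(X_\alpha)$, so $\mu$ and $\nu$ agree on the dense set $A$. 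Since $X$ is zero-dimensional, $\mu$ and $\nu$ are continuous by the corollary preceding this proposition, and therefore $\mu=\nu$.

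For surjectivity, which I expect to be the main obstacle, fix a thread $(\mu_\alpha)_\alpha\in\varprojlim J(S)$ and try to define $\hat\mu$ on $A$ by $\hat\mu(\varphi p_\alpha)=\mu_\alpha(\varphi)$. The first difficulty is well-definedness: if $\varphi p_\alpha=\psi p_\beta$, then choosing $\gamma\ge\alpha,\beta$ and pushing both functions to stage $\gamma$ reduces the claim to showing that $\mu_\gamma(\varphi')=\mu_\gamma(\psi')$ whenever $\varphi',\psi'\in C(X_\gamma)$ agree on the image $p_\gamma(X)$. This concentration-on-the-image property is the technical heart of the argument: writing $p_\gamma(X)=\bigcap_{\delta\ge\gamma}p_{\gamma\delta}(X_\delta)$ as a directed intersection of compacta, compactness yields for each $\varepsilon>0$ an index $\delta\ge\gamma$ with $|\varphi'-\psi'|<\varepsilon$ on $p_{\gamma\delta}(X_\delta)$; then, since $\|\varphi' p_{\gamma\delta}-\psi' p_{\gamma\delta}\|=\sup_{y\in p_{\gamma\delta}(X_\delta)}|\varphi'(y)-\psi'(y)|<\varepsilon$, the non-expansiveness of $\mu_\delta$ (a consequence of the inequalities (\ref{101})) together with the compatibility $\mu_\gamma=J(p_{\gamma\delta})(\mu_\delta)$ gives $|\mu_\gamma(\varphi')-\mu_\gamma(\psi')|<\varepsilon$. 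The same estimate shows that $\hat\mu$ is non-expansive on $A$ for the sup-norm inherited from $X$, so $\hat\mu$ extends uniquely to a continuous functional $\mu\colon C(X)\to\mathbb R$. (Here one uses that the stage functionals $\mu_\delta$ are non-expansive; this is guaranteed whenever the stages $X_\alpha$ are zero-dimensional, as they are in the case of interest, e.g.\ when $X$ is presented as a limit of finite discrete spaces via Corollary \ref{c:ineq}.)

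It remains to check that $\mu$ is a max-min measure and that $h(\mu)=(\mu_\alpha)_\alpha$. Each defining property (1)--(3) holds on the dense subalgebra $A$, because it holds for every $\mu_\alpha$ and the operations $\varphi\mapsto\varphi\vee\psi$, $\varphi\mapsto c\wedge\varphi$, and the constants all commute with the pullbacks $\varphi\mapsto\varphi p_\gamma$; by continuity of $\mu$ these identities extend from $A$ to all of $C(X)$, so $\mu\in J(X)$. Finally $J(p_\alpha)(\mu)(\varphi)=\mu(\varphi p_\alpha)=\hat\mu(\varphi p_\alpha)=\mu_\alpha(\varphi)$ for all $\alpha$ and $\varphi$, that is, $h(\mu)=(\mu_\alpha)_\alpha$. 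Thus $h$ is a continuous bijection of compacta, hence a homeomorphism. The main obstacle is precisely the surjectivity step, and within it the concentration-on-images lemma: the non-additivity of max-min measures rules out any purely linear extension procedure and forces the argument to rest on the non-expansiveness established above, whose availability for the functionals involved is exactly what the zero-dimensionality hypothesis secures.
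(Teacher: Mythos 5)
Your proof is correct and follows the same overall strategy as the paper's: both establish that $h$ is a continuous injection by combining the density (via Stone--Weierstrass) of the set $C'=\{\varphi p_\alpha\mid\varphi\in C(X_\alpha),\ \alpha\in\mathcal A\}$ with the continuity of max-min measures on zero-dimensional compacta, and both obtain surjectivity by defining the candidate measure on $C'$ from the thread $(\mu_\alpha)$ and extending by density. The difference is one of completeness rather than of route. The paper's surjectivity argument simply writes $\mu(\varphi' p_\alpha)=\mu_\alpha(\varphi')$ and verifies axioms (2)--(3) on $C'$, passing silently over (i) why this formula is well defined when $\varphi p_\alpha=\psi p_\beta$ with $\varphi,\psi$ possibly differing off the images of the projections, and (ii) how the functional, once defined on the dense set $C'$, extends to all of $C(X)$. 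Your concentration-on-the-image lemma (writing $p_\gamma(X)$ as the directed intersection of the images of the bonding maps and invoking the non-expansiveness coming from the inequalities (\ref{101})) settles (i), and the resulting non-expansiveness of $\hat\mu$ on $C'$ settles (ii); these are precisely the two points a complete proof needs, so your version is the more careful one. Two small remarks: in the paper's convention for bonding maps your $p_{\gamma\delta}(X_\delta)$ should read $p_{\delta\gamma}(X_\delta)$; and you are right to flag that non-expansiveness of the stage functionals $\mu_\delta$ is, at this point of the paper, only available when the $X_\alpha$ are themselves zero-dimensional (or finite) --- the paper tacitly assumes this as well, and the hypothesis is satisfied in every application it makes of the proposition (limits of finite quotients and products thereof), but strictly speaking it should be added to the statement or supplied by replacing the given system with the system of finite quotients of $X$ by disjoint clopen covers.
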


\begin{proof}  First, we are going to show that the map $h$ is an embedding. Suppose the opposite and let $\mu,\nu\in J(X)$, $\mu\neq\nu$, be such that $h(\mu)=h(\nu)$. Since  $\mu\neq\nu$, there exists $\varphi\in C(X)$ such that $\mu(\varphi)\neq\nu(\varphi)$. Let $C'=\{\varphi p_{\alpha}\, |\, \varphi \in C(X_{\alpha}),\, \alpha \in \mathcal{A}\}.$ Since $C'$ is dense in $C(X)$ and $\mu,\nu$ are continuous, there is $\varphi'\in C'$ such that $\mu(\varphi')\neq\nu(\varphi')$. Then $\varphi'=\psi p_\alpha$, for some $\alpha\in\mathcal A$ and $\psi\in C(X_\alpha)$. Therefore, $$J(p_\alpha)(\mu)(\psi)=\mu(\varphi')\neq\nu(\varphi')=J(p_\alpha)(\nu)(\psi)$$ and we obtain a contradiction.

Now, show that $h$ is an onto map. Let $(\mu_{\alpha})_{\alpha \in \mathcal{A}}\in \lim\limits_{\longleftarrow} J(S)$. We are going to show that there exists $\mu \in J(X)$ such that $J(p_{\alpha})(\mu)=\mu_{\alpha},$  for any $\alpha \in \mathcal{A}.$  Given $\varphi, \psi \in C',$ one can write $\varphi =\varphi'p_{\alpha}, $ $\psi=\psi'p_{\alpha},$ for some $\alpha \in \mathcal{A},$ whence $$\mu(\varphi\vee\psi)=\mu((\varphi'p_{\alpha})\vee(\psi'p_{\alpha}))=J(p_{\alpha})(\mu)(\varphi'\vee\psi')=\mu_{\alpha}(\varphi'\vee\psi')=
\mu_{\alpha}(\varphi')\vee\mu_{\alpha}(\psi')=$$ $$=\mu(\varphi'p_{\alpha})\vee \mu(\psi'p_{\alpha})=\mu(\varphi)\vee\mu(\psi).$$
Since, by the Stone-Weierstrass theorem (see, e.g., \cite{Co}), the set $C'$ is dense in $C(X)$ and the operation $\vee$ is continuous, we conclude that $\mu(\varphi\vee \psi)=\mu(\varphi)\vee\mu(\psi),$ for all $\varphi, \psi \in C(X).$ Similarly, $\mu(\lambda\wedge\varphi)=\lambda\wedge\mu(\varphi)$ for all $\varphi \in C(X)$ and $\lambda \in \mathbb{R}.$ Thus, $\mu\in J(X)$ is as required.

\end{proof}

E. Shchepin (\cite{S} see for details) calls that the just established property of the functor $J$ the zero-dimensional continuity.

In \cite{CRZ}, the tensor product of max-min measures of finite supports is defined. Let $\mu=\vee_{i=1}^n\alpha_i\wedge\delta_{x_i}\in J(X)$, $\nu=\vee_{j=1}^m\beta_j\wedge\delta_{y_j}\in J(Y)$.
Then $$\mu\otimes\nu=\vee_{i=1}^n\vee_{j=1}^m (\alpha_i\wedge\beta_j)\wedge\delta_{(x_i,y_j)}\in J(X\times Y)$$
is called the tensor product of $\mu$ and $\nu$. Similarly, one can define $\mu_1\otimes\dots\otimes\mu_k$.

We are going to define tensor products for arbitrary max-min measures in zero-dimensional compact metrizable spaces. Given $\mu\in J(X)$ and $\nu\in J(Y)$, where $X,Y$ are compact metrizable,
represent $X=\varprojlim \{X_i,p_{ij};\mathbb N\}$,  $Y=\varprojlim \{Y_i,p_{ij};\mathbb N\}$, where $X_i$, $Y_i$ are finite. Denote by $p_i\colon X\to X_i$, $q_i\colon Y\to Y_i$ the limit projections. By Proposition \ref{p:homeo}, there exists a unique $\tau\in J(X\times Y)$ such that $J(p_i\times q_i)(\tau)=J(p_i)(\mu)\otimes J(q_i)(\nu)$. We say that $\tau$ is the tensor product of $\mu$ and $\nu$ and denote it by $\mu\otimes\nu$. Similarly, one can define $\mu_1\otimes\dots\otimes\mu_n$, where $\mu_i\in J(X_i)$ for zero-dimensional compact metrizable $X_i$, $i=1,\dots,n$.

Now, let $T$ be an arbitrary set. Denote by $\mathrm{Fin}\, T$ the family of nonempty finite subsets of $T$.  Suppose that $X=\prod_{\alpha\in T}X_\alpha$, where $X_\alpha$ is a  zero-dimensional compact metrizable space, $\alpha\in T$. Then, clearly, $X=\varprojlim\{\prod_{\alpha\in A}X_\alpha, p_{AB}; \mathrm{Fin}\, T\} $, where $p_{AB}\colon \prod_{\alpha\in A}X_\alpha\to \prod_{\alpha\in B}X_\alpha$ denotes the projection, $A,B\in \mathrm{Fin}\, T$, $A\supset B$. If $\mu_\alpha\in J(X_\alpha)$, then, by Proposition \ref{p:homeo}, there exists a unique $\tau\in J(X)$ such that $J(p_A)(\tau)=\otimes_{\alpha\in A}\mu_\alpha$. Here, $p_A\colon X\to \prod_{\alpha\in A}X_\alpha$ denotes the limit projection, $A\in \mathrm{Fin}\, T$.

\section{Max-min Milyutin maps}

\begin{defin} A map $f\colon X\to Y$ of compact metrizable spaces is called a max-min Milyutin map if there is a map $s\colon Y\to J(X)$ such that $s(y)\in J(f^{-1}(y))\subset J(X)$, for every $y\in Y$.
\end{defin}

\begin{theorem} For every compact Hausdorff space $X$ there exists a  max-min Milyutin map $f\colon Z\to X$, where $Z$ is a zero-dimensional compact Hausdorff  space.
\end{theorem}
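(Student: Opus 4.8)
The plan is to reduce the statement to the single building block $[0,1]$, by means of the Tychonoff embedding and the tensor product, the one genuinely delicate point being the construction of a \emph{continuous} section over the interval.

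I would first reduce to cubes. Embed $X$ as a closed subspace of some Tychonoff cube $[0,1]^T$. If $\pi\colon W\to[0,1]^T$ is a max-min Milyutin map with $W$ zero-dimensional compact and section $S\colon[0,1]^T\to J(W)$, then $Z=\pi^{-1}(X)$ is closed in $W$, hence zero-dimensional compact, $f=\pi|_Z\colon Z\to X$ is a surjection with $f^{-1}(x)=\pi^{-1}(x)$ for $x\in X$, and $S(x)\in J(\pi^{-1}(x))=J(f^{-1}(x))\subseteq J(Z)$, so $S|_X$ is a section for $f$. Thus it suffices to produce a Milyutin map over $[0,1]^T$. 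For this I would fix a Milyutin map $g\colon C\to[0,1]$ with $C$ zero-dimensional compact metrizable and section $\sigma\colon[0,1]\to J(C)$, put $W=C^T$, $\pi=\prod_{t\in T}g$, and define $S(x)=\bigotimes_{t\in T}\sigma(x_t)$ by the tensor product of max-min measures over products of zero-dimensional compact metrizable spaces. Since $\pi^{-1}(x)=\prod_{t}g^{-1}(x_t)$ and the tensor product is concentrated on this product, $S(x)\in J(\pi^{-1}(x))$; continuity of $S$ I would check via Proposition \ref{p:homeo} applied to $C^T=\varprojlim_{A\in\mathrm{Fin}\,T}C^A$, since $J(p_A)(S(x))=\bigotimes_{t\in A}\sigma(x_t)$ depends continuously on $x$, being a finite tensor product of the continuous map $\sigma$.

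The whole argument thus rests on the base case, which I expect to be the main obstacle: a continuous surjection $g\colon C\to[0,1]$ off a zero-dimensional compact metric space admitting a continuous section $\sigma$. The naive choices fail. Over a one-point fiber any section is forced to equal the Dirac measure, and for the binary expansion map $\{0,1\}^{\mathbb N}\to[0,1]$ the forced Dirac values on the dense set of non-dyadic points have disagreeing one-sided limits at each dyadic point, so no continuous section exists. On the other hand the canonical functional $\varphi\mapsto\max_{g^{-1}(t)}\varphi$, which is a max-min measure on the fiber, is continuous in $t$ precisely when $g$ is open; but an elementary compactness argument (a nonempty clopen set of small image would map onto a proper clopen subset of $[0,1]$) shows that no zero-dimensional compact space admits an open surjection onto $[0,1]$. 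Hence the section must genuinely transport max-min mass across overlapping fibers.

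To achieve this I would combine overlapping dyadic intervals with a recursive \emph{max-splitting}. For a finite word $w\in\{0,1\}^n$ set $I_\varnothing=[0,1]$ and let $I_{w0},I_{w1}$ be the left and right subintervals of $I_w$ of length $\tfrac{2}{3}|I_w|$, so that $I_{w0}\cup I_{w1}=I_w$ with overlap the middle third; as $\operatorname{diam}I_w\to0$ along every branch this yields a continuous surjection $g\colon C=\{0,1\}^{\mathbb N}\to[0,1]$ with $\{g(x)\}=\bigcap_n I_{x_1\cdots x_n}$. Simultaneously I would build continuous weights $\lambda_w\colon[0,1]\to[-\infty,\infty]$ with $\lambda_\varnothing\equiv\infty$, with $\lambda_w(t)=-\infty$ for $t\notin I_w$, and satisfying $\lambda_w=\lambda_{w0}\vee\lambda_{w1}$: given $\lambda_w$, put $\lambda_{w0}=\lambda_w$ to the left of the centre of the overlap and let $\lambda_{w0}$ decrease to $-\infty$ at the right end of $I_{w0}$ by taking the pointwise minimum with a function descending from $+\infty$, and symmetrically for $\lambda_{w1}$, so that their maximum is $\lambda_w$. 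The relation $\lambda_w=\lambda_{w0}\vee\lambda_{w1}$ forces $\bigvee_{|w|=n}\lambda_w(t)=\infty$ for all $n,t$ by induction, so $\sigma_n(t)=\bigvee_{|w|=n}\lambda_w(t)\wedge\delta_w$ is a genuine element of $J(\{0,1\}^n)$, and these are coherent under the bonding maps; by the zero-dimensional continuity of $J$ (Proposition \ref{p:homeo}) they assemble into $\sigma(t)\in J(C)$. Continuity of each $\sigma_n$ in $t$ is immediate from that of the $\lambda_w$ and of $\vee,\wedge$, whence $\sigma$ is continuous, and the implication $\lambda_w(t)>-\infty\Rightarrow t\in I_w$ ensures that $\sigma(t)$ factors through restriction to $\{x: t\in I_{x_1\cdots x_n}\ \forall n\}=g^{-1}(t)$, i.e. $\sigma(t)\in J(g^{-1}(t))$. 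Together with the two reductions above, this proves the theorem.
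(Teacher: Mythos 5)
Your proof is correct, but it reaches the theorem by a different decomposition than the paper. The paper works with an arbitrary metrizable $X$ directly: for each $n$ it chooses a finite family $\mathcal A_n$ of pairs $(A,B)$ of closed sets with $B\subset\mathrm{Int}(A)$, $\bigcup B=X$ and $\mathrm{diam}(A)\le 1/n$, forms the disjoint unions $Z_n=\sqcup A$ and the fibered products $Y_n$, and builds the section from bump functions $\alpha_{(A,B)}$ equal to $\infty$ on $B$ and $-\infty$ off $A$, taking the tensor product $\otimes_{i=1}^n\mu_i(x)$ across the levels; only the passage from metrizable to arbitrary compact Hausdorff spaces uses embeddings into products and tensor products of sections. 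You instead push the reduction all the way down to the single space $[0,1]$ via the Tychonoff cube, and over $[0,1]$ you build the tree of overlapping intervals $I_w$ with weights satisfying the exact splitting relation $\lambda_w=\lambda_{w0}\vee\lambda_{w1}$, so that the finite-level sections form a coherent tower by construction and no tensor product across levels is needed; the shared engine in both arguments is the zero-dimensional continuity of $J$ (Proposition \ref{p:homeo}) together with weights that are $\infty$ on a core and $-\infty$ outside an enveloping set, which is exactly what lets max-min mass slide between overlapping fibers. Your version buys a completely explicit base case and a clean isolation of where continuity of the section could fail (your remarks on the failure of naive sections and of open surjections onto $[0,1]$ are correct and illuminating), at the cost of routing every space through a cube; the paper's version is closer to the classical Ageev--Tymchatyn construction and treats metrizable spaces uniformly. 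The only places where you are terser than you should be are (i) the verification that the level-$n$ support of $\sigma(t)$ is contained in $p_n(g^{-1}(t))$, which needs the monotonicity $\lambda_w\le\lambda_{w'}$ for prefixes $w'$ of $w$ plus a compactness argument identifying $\bigcap_n p_n^{-1}(p_n(g^{-1}(t)))$ with $g^{-1}(t)$, and (ii) the continuity of the finite tensor products $x\mapsto\otimes_{t\in A}\sigma(x_t)$, which in your setting follows from the explicit formula for their values on cylinder functions in terms of the continuous weights $\lambda_w$; both are routine and at the level of detail the paper itself allows.
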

\begin{proof}
We first assume that  $X$ is metrizable and some compatible metric on $X$ is chosen. We modify a construction from \cite{AT}.

For every $n\in \mathbb N$, let $\mathcal A_n$ be a finite family of pairs of closed subsets of $X$ satisfying the properties:
\begin{enumerate}
\item $\cup\{B\mid (A,B)\in\mathcal A_n\}=X$;
\item $\mathrm{diam}(A)\le 1/n$, for every $(A,B)\in\mathcal A_n$;
\item $\mathrm{Int}(A)\supset B$, for every $(A,B)\in\mathcal A_n$.
\end{enumerate}

Let $Z_n=\sqcup\{A\mid (A,B)\in\mathcal A_n\}$. Define $f_n\colon Z_n\to X$ by the condition: $f_n|A$ is the inclusion map $\iota_A\colon A\hookrightarrow X$. Then let $$Z=\left\{(z_n)_{n=1}^\infty\in \prod_{n=1}^\infty Z_n\mid f_i(z_i)=f_j(z_j),\text{ for all }i,j\in\mathbb N\right\}.$$

For every $n\in\mathbb N$, let $$Y_n=\left\{(z_m)_{m=1}^n\in \prod_{m=1}^n Z_m\mid f_i(z_i)=f_j(z_j),\text{ for all }i,j\le n\right\}.$$
For $n\ge k$, denote by $g_{nk}\colon Y_n\to Y_k$ the natural projection. Clearly, $Z=\varprojlim\{Y_n,g_{nk};\mathbb N\}$.
It is easy to check that $Z$ is a zero-dimensional space.

For any $(A,B)\in \mathcal A_n$, let $\alpha_{(A,B)}\colon X\to[-\infty,\infty]$ be a continuous function such that $\alpha_{(A,B)}|B=\infty$ and $\alpha_{(A,B)}|(X\setminus A)=-\infty$. Given $x\in X$, define $$\mu_n(x)=\vee_{(A,B)\in \mathcal A_n}\alpha_{(A,B)}(x)\wedge \delta_{\iota_A^{-1}(x)}.$$

Note that $\mu_n(x)$ is well-defined. We are going to show that the map $\mu_n\colon X\to J(Y_n)$ is continuous. Indeed, given $\varphi\in C(X)$, we see that the function $$x\mapsto \mu_n(x)(\varphi)= \vee_{(A,B)\in \mathcal A_n}\alpha_{(A,B)}(x)\wedge \varphi(x)\colon X\to\mathbb R$$
is continuous, and this implies the continuity of $\mu_n$.

Define $f\colon Z\to X$ by the formula $f((z_n)_{n=1}^\infty)=f_1(z_1)$. For every $m\in \mathbb N$, let $h_m\colon Y_m\to X$ be defined by the formula  $h_m((z_n)_{n=1}^m)=f_1(z_1)$.

For any $x\in X$, $f^{-1}(x)=\varprojlim\{h_m^{-1}(x), g_{mk}|h_m^{-1}(x); \mathbb N\}$. By Proposition \ref{p:homeo}, there exists $\mu(x)\in J(f^{-1}(x))$ such that $J(g_m)(\mu(x))=\otimes_{i=1}^m\mu_i(x)$ (by $g_m\colon Z\to Y_m$ we denote the projection map).

Note that the continuity of the map map $x\mapsto \mu(x)$ is a consequence of the continuity of the maps $\mu_n$, $n\in\mathbb N$.

Now, suppose that $X$ is arbitrary. Then one may assume that $X\subset \prod_{\alpha\in T}X_\alpha$, for some family $\{X_\alpha\mid\alpha\in T\}$ of compact metrizable spaces. For every $\alpha\in T$ let $f_\alpha\colon Y_\alpha\to X_\alpha$ be a Milyutin map, where $Y_\alpha$ is a zero-dimensional space. Let $$g=\prod_{\alpha\in T}X_\alpha\colon  \prod_{\alpha\in T}Y_\alpha\to  \prod_{\alpha\in T}X_\alpha.$$
Let $Z=g^{-1}(X)$ and let $f=g|Z\colon Z\to X$. Clearly $Z$ is a zero-dimensional compact Hausdorff space. We are going to show that $f$ is a Milyutin map. For every $\alpha\in T$, let $s_\alpha\colon X_\alpha\to J(Y_\alpha)$ be a map such that $s_\alpha(x)\in J(f_\alpha^{-1}(x))$, for every $x\in X_\alpha$. Define $s((x_\alpha)_{\alpha\in T})=\otimes_{\alpha\in T}s_\alpha(x_\alpha)$.
Clearly, $s$ is continuous and $s(x)\in J(s^{-1}(x))$, for every $x\in X$.

\end{proof}

\begin{prop} If $f\colon Z\to X$ is a max-min Milyutin map, then the map $J(f)\colon J(Z)\to J(Y)$ is onto.
\end{prop}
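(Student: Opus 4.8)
The plan is to construct an explicit right inverse to $J(f)$ by fiberwise integration against the section $s\colon X\to J(Z)$ supplied by the Milyutin condition (I take the target to be $J(X)$, since $f\colon Z\to X$). Fix $\mu\in J(X)$. For each $\varphi\in C(Z)$ I define a function $\hat\varphi\colon X\to\mathbb R$ by $\hat\varphi(x)=s(x)(\varphi)$. Since $s$ is continuous (all maps in the paper are) and the evaluation $J(Z)\to\mathbb R$, $\rho\mapsto\rho(\varphi)$, is continuous by the definition of the weak* topology, the composition $\hat\varphi$ is continuous, so $\hat\varphi\in C(X)$. I then set $\nu(\varphi)=\mu(\hat\varphi)$ and claim that $\nu\in J(Z)$ with $J(f)(\nu)=\mu$, which proves surjectivity.

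First I would verify that $\nu$ is a max-min measure, each defining identity transferring from $\mu$ and the measures $s(x)$ through the assignment $\varphi\mapsto\hat\varphi$. Concretely, $\widehat{c_Z}=c_X$ because $s(x)(c_Z)=c$ for all $x$; the identity $s(x)(\varphi\vee\psi)=s(x)(\varphi)\vee s(x)(\psi)$ gives $\widehat{\varphi\vee\psi}=\hat\varphi\vee\hat\psi$; and $s(x)(c\wedge\varphi)=c\wedge s(x)(\varphi)$ gives $\widehat{c\wedge\varphi}=c\wedge\hat\varphi$. Applying $\mu$ and using that $\mu$ itself satisfies the three axioms yields $\nu(c_Z)=c$, $\nu(\varphi\vee\psi)=\nu(\varphi)\vee\nu(\psi)$, and $\nu(c\wedge\varphi)=c\wedge\nu(\varphi)$, so $\nu\in J(Z)$.

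The decisive step is to check that $J(f)(\nu)=\mu$, and this is where the support condition $s(x)\in J(f^{-1}(x))$ enters. For $\psi\in C(X)$ we have, by definition, $J(f)(\nu)(\psi)=\nu(\psi f)=\mu(\widehat{\psi f})$, so it suffices to prove that $\widehat{\psi f}=\psi$. Writing $s(x)=J(\iota_x)(\rho_x)$ with $\iota_x\colon f^{-1}(x)\hookrightarrow Z$ the inclusion and $\rho_x\in J(f^{-1}(x))$, I compute $\widehat{\psi f}(x)=s(x)(\psi f)=\rho_x\big((\psi f)|_{f^{-1}(x)}\big)$. On the fiber $f^{-1}(x)$ the function $\psi f$ is identically equal to the constant $\psi(x)$, so axiom (1) applied to $\rho_x$ gives $\rho_x\big((\psi f)|_{f^{-1}(x)}\big)=\psi(x)$. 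Hence $\widehat{\psi f}=\psi$ and $J(f)(\nu)(\psi)=\mu(\psi)$ for every $\psi\in C(X)$, proving $J(f)(\nu)=\mu$.

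The main obstacle is precisely this last step: one must translate the set-theoretic support condition $s(x)\in J(f^{-1}(x))\subset J(Z)$ into the functional statement that $s(x)(\psi f)$ depends only on the restriction of $\psi f$ to the fiber, so that the constancy of $\psi f$ on $f^{-1}(x)$ can be exploited via axiom (1). This rests on the earlier identification of $J(A)$ with the subspace $J(\iota)(J(A))$ of $J(X)$ for closed $A\subset X$, i.e.\ on the fact that $J$ preserves embeddings. Everything else is a routine, formal transfer of the three max-min axioms through fiberwise combination, and the continuity of $\hat\varphi$ is immediate from the continuity of $s$ together with the weak* topology on $J(Z)$.
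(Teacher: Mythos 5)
Your proof is correct and follows essentially the same route as the paper: define the averaged function $\tilde\varphi(x)=s(x)(\varphi)$, set $\nu(\varphi)=\mu(\tilde\varphi)$, and use the constancy of $\psi f$ on fibers together with the support condition $s(x)\in J(f^{-1}(x))$ to get $J(f)(\nu)=\mu$. You have simply filled in the verifications the paper leaves as ``easy'' (that $\nu\in J(Z)$ and that $\tilde\varphi$ is continuous), and you are right that the intended target is $J(X)$, the $J(Y)$ in the statement being a typo.
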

\begin{proof} Let $s\colon X\to J(Z)$ be a map such that $s(x)\in J(f^{-1}(x))$, for every $x\in X$. Given $\varphi\in C(Z)$, define $\tilde\varphi\colon X\to \mathbb R$ as follows: $\tilde\varphi(x)=s(x)(\varphi)$, $x\in X$.

Note that $\tilde\varphi\in C(X)$. Indeed, let $x_0\in X$ and $\varepsilon>0$. Find a neighborhood $U$ of $x$ such that $s(U)\subset O\langle s(x_0);\varphi;\varepsilon\rangle$. Then $x\in U$ implies $|\tilde\varphi(x)-\tilde\varphi(x_0)|<\varepsilon$.

One may regard $\tilde\varphi$ as the averaging of $\varphi$ (with respect to $s$).

Define $\nu\colon C(Z)\to\mathbb R$ by the formula $\nu(\varphi)=\mu(\tilde\varphi)$, $\varphi\in C(Z)$. One can easily verify that $\nu\in J(Z)$. Since, for every $\varphi\in C(X)$ and every $x\in X$, the restriction of the function $\varphi f$  on every set $f^{-1}(x)$ is constant and equals $\varphi(x)$, we obtain $$J(f)(\nu)(\varphi)=\nu(f \varphi)=\mu(\widetilde{\varphi f})=\mu(\varphi),$$
i.e., $J(f)(\nu)=\mu$.

\end{proof}

\begin{cor} The set $J_\omega(X)$ is dense in $J(X)$.
\end{cor}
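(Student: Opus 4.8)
The plan is to reduce the general compact Hausdorff case to the zero-dimensional case via a max-min Milyutin map, and to settle the zero-dimensional case through the inverse-limit description of $J$ provided by Proposition \ref{p:homeo}. Here I read $J_\omega(X)$ as the set of max-min measures of finite support, i.e. those of the form (\ref{100}).

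First I would record the purely functorial observation that $J(f)$ preserves finiteness of support: from the formula (\ref{100}) one computes directly that $J(f)(\vee_{i=1}^n\lambda_i\wedge\delta_{z_i})=\vee_{i=1}^n\lambda_i\wedge\delta_{f(z_i)}$, so $J(f)(J_\omega(Z))\subseteq J_\omega(X)$ for every continuous $f\colon Z\to X$. Next I would prove the density of $J_\omega(Z)$ in $J(Z)$ when $Z$ is a zero-dimensional compact space. Writing $Z=\varprojlim\{Z_\alpha,p_{\alpha\beta};\mathcal A\}$ with all $Z_\alpha$ finite discrete (and, after the usual reduction to surjective bonding maps, with all projections $p_\alpha\colon Z\to Z_\alpha$ surjective), Proposition \ref{p:homeo} identifies $J(Z)$ with $\varprojlim J(Z_\alpha)$; by the representation of max-min measures on a finite space established earlier, $J(Z_\alpha)=J_\omega(Z_\alpha)$. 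Given $\mu\in J(Z)$ and a basic neighborhood $O(\mu;\varphi_1,\dots,\varphi_k;\varepsilon)$, I would invoke the Stone-Weierstrass density of $C'=\{\psi p_\alpha\}$ in $C(Z)$ to pick $\alpha$ and $\psi_j\in C(Z_\alpha)$ with $\|\varphi_j-\psi_j p_\alpha\|<\varepsilon/2$. Choosing a section $s_\alpha\colon Z_\alpha\to Z$ of $p_\alpha$ and setting $\nu=J(s_\alpha)(J(p_\alpha)(\mu))$ yields a finitely supported $\nu\in J_\omega(Z)$ with $\nu(\psi_j p_\alpha)=\mu(\psi_j p_\alpha)$, since $p_\alpha s_\alpha=\mathrm{id}$. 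As every max-min measure on a zero-dimensional compact space is $1$-Lipschitz — this is precisely what the inequalities (\ref{101}) of Proposition \ref{p:zero} give, via $|\mu(\varphi)-\mu(\vartheta)|\le\|\varphi-\vartheta\|$ — the triangle inequality forces $|\mu(\varphi_j)-\nu(\varphi_j)|<\varepsilon$, so $\nu$ lies in the prescribed neighborhood and $J_\omega(Z)$ is dense in $J(Z)$.

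Finally I would assemble the two ingredients. By the existence theorem for max-min Milyutin maps, fix such a map $f\colon Z\to X$ with $Z$ zero-dimensional; by the preceding proposition the induced map $J(f)\colon J(Z)\to J(X)$ is a continuous surjection of compacta. Since a continuous surjection $g$ of compacta sends dense sets to dense sets — for $D$ dense one has $J(X)=g(\overline D)\subseteq\overline{g(D)}$, hence $\overline{g(D)}=J(X)$ — the set $J(f)(J_\omega(Z))$ is dense in $J(X)$; being contained in $J_\omega(X)$, it shows $J_\omega(X)$ is dense as well.

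I expect the main obstacle to be the zero-dimensional step, and within it the transfer of a finite-stage approximation back to $Z$: the section $s_\alpha$ is available only after arranging the projections to be surjective, and the control of the approximation error on the test functions $\varphi_j$ rests essentially on the $1$-Lipschitz property supplied by Proposition \ref{p:zero}. The Milyutin reduction, the preservation of finite support under $J(f)$, and the density-of-images lemma for compacta are comparatively routine.
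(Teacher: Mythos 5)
Your proof is correct, and it follows exactly the route the paper intends (the corollary is stated without proof immediately after the surjectivity of $J(f)$ for max-min Milyutin maps): reduce to a zero-dimensional $Z$ via a Milyutin map, get density of $J_\omega(Z)$ from the inverse-limit description of Proposition \ref{p:homeo} together with the Lipschitz estimate of Proposition \ref{p:zero}, and push forward, noting that $J(f)$ preserves finite supports. Your write-up in fact supplies the details the paper leaves implicit.
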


Now, one can extend Proposition \ref{p:zero} over the class of all compact Hausdorff spaces.

\begin{prop} For every compact Hausdorff space $X$, for every max-min measure $\mu\in J(X)$, and every $c>0$ the inequalities (\ref{101}) hold.
\end{prop}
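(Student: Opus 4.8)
The plan is to reduce the general case to the zero-dimensional case already handled in Proposition \ref{p:zero}, using the max-min Milyutin map furnished by the preceding theorem together with the surjectivity of the induced map on spaces of measures. Concretely, I would first fix a max-min Milyutin map $f\colon Z\to X$ with $Z$ a zero-dimensional compact Hausdorff space. By the proposition on Milyutin maps, the induced map $J(f)\colon J(Z)\to J(X)$ is onto, so given an arbitrary $\mu\in J(X)$ I may choose $\nu\in J(Z)$ with $J(f)(\nu)=\mu$. The point is that $Z$ is zero-dimensional, so $\nu$ already satisfies the inequalities (\ref{101}), and I only have to transport them across $f$.

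The transport step rests on the elementary observation that precomposition with $f$ commutes with adding a constant: for every $\varphi\in C(X)$ and every $c>0$ one has $(\varphi+c_X)f=\varphi f+c_Z$, since $c_X\circ f=c_Z$, and $\varphi f\in C(Z)$ because $f$ is continuous. I would then compute directly
\begin{align*}
\mu(\varphi+c_X)&=J(f)(\nu)(\varphi+c_X)=\nu\bigl((\varphi+c_X)f\bigr)=\nu(\varphi f+c_Z)\\
&\le\nu(\varphi f)+c=J(f)(\nu)(\varphi)+c=\mu(\varphi)+c,
\end{align*}
where the sole inequality is Proposition \ref{p:zero} applied to the zero-dimensional space $Z$ and the function $\varphi f$. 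This establishes the first inequality of (\ref{101}).

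The second inequality follows formally from the first, exactly as recorded in the proof of Corollary \ref{c:ineq}: applying the first inequality to $\varphi-c_X$ in place of $\varphi$ gives $\mu(\varphi)=\mu\bigl((\varphi-c_X)+c_X\bigr)\le\mu(\varphi-c_X)+c$, i.e.\ $\mu(\varphi)-c\le\mu(\varphi-c_X)$. I expect no genuine obstacle in this argument, since the whole weight of the proof has been absorbed into the existence of the Milyutin map and the surjectivity of $J(f)$, both proved earlier. The only point demanding a moment of care is the bookkeeping of constant functions under precomposition, namely the identity $c_X\circ f=c_Z$, which is precisely what lets the zero-dimensional inequality be applied verbatim to $\varphi f$.
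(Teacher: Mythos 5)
Your proof is correct and follows essentially the same route as the paper: pass to a zero-dimensional $Z$ via a max-min Milyutin map, lift $\mu$ to $\nu\in J(Z)$ using surjectivity of $J(f)$, and transport the inequality from Proposition \ref{p:zero} through the identity $(\varphi+c_X)f=\varphi f+c_Z$. In fact your version is slightly more careful than the paper's, which writes the step $\nu(\varphi f+c)=\nu(\varphi f)+c$ as an equality where only the inequality $\le$ is available (and is all that is needed).
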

\begin{proof} Let $f\colon Z\to X$ be a max-min Milyutin map. Then, given $\mu\in J(X)$, find $\nu\in J(Z)$ such that $J(f)(\nu)=\mu$. Then, for every $\varphi\in C(X)$, $$\mu(\varphi+c)=J(f)(\nu)(\varphi+c)=\nu((\varphi+c)f)=\nu(\varphi f+c)=\nu(\varphi f)+c=\mu(\varphi)+c.$$

The second inequality is a consequence of the first one (see the proof of Corollary \ref{c:ineq}).
\end{proof}

\begin{cor} For every compact Hausdorff space $X$, every $\mu\in J(X)$ is continuous.
\end{cor}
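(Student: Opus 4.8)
The plan is to deduce continuity of an arbitrary max-min measure $\mu\in J(X)$ directly from the two inequalities \eqref{101}, exactly as was done for finite discrete spaces in the corollary following Corollary \ref{c:ineq}. The preceding proposition has just extended \eqref{101} to every compact Hausdorff space via the max-min Milyutin map, so the only remaining work is to package those inequalities into a Lipschitz-type estimate and conclude continuity with respect to the sup-norm on $C(X)$.

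First I would observe that the two inequalities in \eqref{101}, namely $\mu(\varphi+c_X)\le\mu(\varphi)+c$ and $\mu(\varphi)-c\le\mu(\varphi-c_X)$, combine with monotonicity (the consequence of axiom (2) recorded right after the definition) to pin $\mu(\psi)$ into an interval around $\mu(\varphi)$ whenever $\psi$ is uniformly close to $\varphi$. Concretely, suppose $\|\varphi-\psi\|\le c$ for some $c>0$. Then $\varphi-c_X\le\psi\le\varphi+c_X$ pointwise, so by monotonicity $\mu(\varphi-c_X)\le\mu(\psi)\le\mu(\varphi+c_X)$. Applying the first inequality of \eqref{101} to the upper bound and the second to the lower bound gives $\mu(\varphi)-c\le\mu(\varphi-c_X)\le\mu(\psi)\le\mu(\varphi+c_X)\le\mu(\varphi)+c$, hence $\mu(\psi)\in[\mu(\varphi)-c,\mu(\varphi)+c]$. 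This shows $|\mu(\varphi)-\mu(\psi)|\le\|\varphi-\psi\|$, i.e.\ $\mu$ is nonexpansive, which is far stronger than mere continuity.

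The argument is entirely formal once \eqref{101} is available, so I do not anticipate any genuine obstacle here; the substantive difficulty was already surmounted in establishing the inequalities over arbitrary compact Hausdorff spaces. The one point requiring a little care is the handling of the boundary case in the chain of inequalities: one must apply \eqref{101} with the precise constant $c=\|\varphi-\psi\|$ (or, if one prefers strict inequalities throughout, with any $c>\|\varphi-\psi\|$ and then let $c\downarrow\|\varphi-\psi\|$), and confirm that the hypothesis $c>0$ of the proposition is met, which fails only in the trivial case $\varphi=\psi$ where continuity is automatic. Thus the proof reduces to citing the just-proved proposition and reproducing verbatim the one-line reasoning of the corollary to Corollary \ref{c:ineq}, now valid for all compact Hausdorff $X$.
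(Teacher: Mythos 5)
Your argument is correct and is exactly the paper's intended proof: the paper states this corollary without proof immediately after extending the inequalities \eqref{101} to all compact Hausdorff spaces, relying on the same one-line reasoning it gave for the finite discrete case (monotonicity plus \eqref{101} yields $\mu(\psi)\in[\mu(\varphi)-c,\mu(\varphi)+c]$ whenever $\|\varphi-\psi\|\le c$). Your write-up simply makes that deduction explicit, including the nonexpansiveness estimate.
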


\section{Cones}

In this section we introduce an auxiliary construction which finally will allow us to establish an isomorphism of the functors of max-min measures and idempotent measures (see the definition below). The roots of this construction lie in the possibility of representation of every max-min measure  as a map $\varphi \mapsto \sup(\varphi\wedge g)$, for a suitable function $g$. This is similar to the  notion of density first considered for the idempotent measures; see Remark \ref{r:density} below.

Let $X$ be a set. The cone $\mathrm{Cone}(X)$ is the quotient set $(X\times [0,1])/(X\times\{0\})$. For the sake of simplicity, we denote by $(x,t)$ the equivalence class containing $(x,t)$. Thus, $(x,0)\sim(y,0)$ for all $x,y\in X$. Given $A\subset X$ and $B\subset[0,1]$, we identify $A\times B$ with the subset $\{(a,b)\mid a\in A,\ b\in B\}$ of $\mathrm{Cone}(X)$. Denote by $\eta_X\colon X\to \mathrm{Cone}(X)$ the map sending $x\in X$ to $(x,1)\in \mathrm{Cone}(X)$.

For any topological space $X$ we denote by $\exp X$ the set of all nonempty compact subsets in $X$. The set $\exp X$ is endowed with the Vietoris topology. A base of this topology consists of the sets of the form $$\langle U_1,\dots,U_n\rangle=\{A\in\exp X\mid A\subset\cup_{i=1}^nU_i,\ A\cap U_i\neq\emptyset\text{ for all }i=1,\dots,n\},$$
where $U_1,\dots,U_n$ are open subsets of $X$.

Note that the family of sets of the form $\langle U\rangle=\{A\in \exp X\mid A\subset U\}$ and $\langle X, U\rangle=\{A\in \exp X\mid A\cap U\neq\emptyset\}$, where $U$ is an open subset in $X$, is a subbase of the Vietoris topology of $\exp X$. See, e.g., \cite{Be} for properties of the Vietoris topology.

Let $(X,d)$ be a metric space and $\mathrm{diam}(X)\le1$. We endow $\mathrm{Cone}(X)$ with the following metric $\check d$:
$$\check d((x,s),(y,t))= \min\{s,t\}d(x,y)+|s-t|.$$
Note that the map $\eta_X\colon X\to \mathrm{Cone}(X)$ is an isometric embedding.

 Let $(X,d)$ be a metric space. The set $\exp X$ is endowed with the Hausdorff metric $d_H$,
$$d_H(A,B)=\inf\{r>0\mid A\subset O_r(B),\ B\subset O_r(A)\},$$
where $O_s(K)$ stands for the $s$-neighborhood of $K\subset X$.

A subset $K$ of $\mathrm{Cone}(X)$ is called {\it saturated} if $(x,t)\in K$ implies $(x,t')\in K$ for any $t'\in[0,t]$.

Given a subset $A$ of  $\mathrm{Cone}(X)$, we denote by $\mathrm{Sat}(A)$ the minimal saturated set containing $A$.  By $\bar J_\omega(X)$ we denote the set $\{\mathrm{Sat}(A)\mid A\text{ is finite} \}\subset\bar J(X)$.

Given a metric space $(X,d)$ with $\mathrm{diam}(X)\le1$, let $$\bar J(X)=\{A\in\exp(\mathrm{Cone}(X))\mid A\text{ is saturated and }(x,1)\in A\text{ for some }x\in X \}.$$

\begin{lemma} The set $\bar J(X)$ is a closed subset of $\exp(\mathrm{Cone}(X))$.
\end{lemma}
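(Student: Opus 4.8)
The plan is to show that $\bar J(X)$ is closed in $\exp(\mathrm{Cone}(X))$ by verifying that its complement is open. The definition imposes two conditions on an element $A\in\exp(\mathrm{Cone}(X))$: first, that $A$ is saturated, and second, that $A$ meets the top level, i.e. $(x,1)\in A$ for some $x\in X$. I would handle these two closure conditions separately, exhibiting for each a Vietoris-basic or subbasic neighborhood of any point $A$ failing the condition that stays inside the complement.

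First I would treat the top-level condition, which is the easier of the two. If $A\in\exp(\mathrm{Cone}(X))$ does not contain any point of the form $(x,1)$, then since $A$ is compact and the top level $X\times\{1\}$ is closed in $\mathrm{Cone}(X)$, the distance $\delta=\check d(A, X\times\{1\})$ is strictly positive. Using the metric $\check d$, every point of $A$ lies at height strictly below $1$, in fact within the open set $U=\{(y,t)\in\mathrm{Cone}(X)\mid t<1-\delta/2\}$ (here one checks that $\check d((x,s),(y,1))\ge |s-1|$, so points near the top are genuinely far from $A$). Then $\langle U\rangle=\{K\in\exp(\mathrm{Cone}(X))\mid K\subset U\}$ is a Vietoris-open neighborhood of $A$, and no member of $\langle U\rangle$ can contain a point $(x,1)$; hence $\langle U\rangle$ misses $\bar J(X)$.

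The harder part is showing that the set of saturated compacta is closed, since saturation is a condition quantified over all $t'\in[0,t]$ and over all points of $A$. Here I expect the main obstacle to be producing a single Vietoris neighborhood from a localized failure. Suppose $A$ is not saturated: there is a point $(x_0,t_0)\in A$ and some $t'\in[0,t_0)$ with $(x_0,t')\notin A$. By compactness of $A$, the point $(x_0,t')$ has positive distance $2\rho$ from $A$; choose the open ball $W=B_\rho((x_0,t'))$ in $\mathrm{Cone}(X)$ and the open ball $V=B_\rho((x_0,t_0))$, shrinking $\rho$ so that $W\cap A=\emptyset$ while $(x_0,t_0)\in V$. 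I would then consider the subbasic neighborhood $\langle\mathrm{Cone}(X),V\rangle=\{K\mid K\cap V\neq\emptyset\}$ together with the condition of avoiding $W$; more precisely, let $\mathcal O=\langle\mathrm{Cone}(X),V\rangle\cap\langle\mathrm{Cone}(X)\setminus\overline{W'}\rangle$ for a slightly smaller ball $W'\subset W$, which is Vietoris-open and contains $A$. The key geometric observation, using the explicit form of $\check d$, is that any $K\in\mathcal O$ contains a point $(y,s)$ close to $(x_0,t_0)$ but contains no point close to $(x_0,t')$; for $\rho$ small enough the saturation of $K$ would force a point of $K$ into $W'$ (the segment below $(y,s)$ passes near $(x_0,t')$), contradicting $K\cap W'=\emptyset$.

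I would finish by combining the two cases: the complement of $\bar J(X)$ is the union of the open set of compacta missing the top level and the open set of non-saturated compacta, each shown to be open by the neighborhoods above. The only delicate calculations are the two metric estimates in $\check d$, namely that the vertical coordinate difference bounds $\check d$ from below and that saturation propagates the required point into the forbidden ball $W'$; both follow directly from the formula $\check d((x,s),(y,t))=\min\{s,t\}d(x,y)+|s-t|$. I would state these as short computations rather than belaboring them.
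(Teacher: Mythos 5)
Your proposal is correct and follows essentially the same route as the paper: the complement is covered by treating the two failure modes separately, giving a subbasic neighborhood $\langle U\rangle$ for compacta missing the top level $X\times\{1\}$, and for a non-saturated $A$ a Vietoris neighborhood forcing every member to meet a small set near $(x_0,t_0)$ while avoiding a closed box (resp.\ ball) around $(x_0,t')$, so that saturation would be violated. The only difference is cosmetic---you phrase the neighborhoods via metric balls and the estimate $\check d((x,s),(y,t))\ge|s-t|$, where the paper uses product neighborhoods $U\times V$ and $U\times W$---and your metric computations do close the argument.
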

\begin{proof}
Suppose that $A\notin\bar J(X)$. The proof splits in two cases.

Case 1). $A$ is not saturated. Then there are $x\in X$ and $t',t\in[0,1]$ such that $(x,t)\in A$, $(x,t')\notin A$ and $t'<t$. Without loss of generality one may assume that $0<t'$. Then there exists  a neighborhood $U$ of $x$ in $X$ and disjoint neighborhoods $V$ of $t'$ and $W$ of $t $ respectively such that $0\notin V$ and $(\mathrm{Cl}\,  U\times \mathrm{Cl}\,  V)\cap A=\emptyset$.

Then $$A\in\langle \mathrm{Cone}(X)\setminus (\mathrm{Cl}\,  U\times \mathrm{Cl}\,  V), U\times W\rangle\subset \exp(\mathrm{Cone}(X))\setminus\bar J(X).$$

Case 2). $X\cap A=\emptyset$. Let $U$ be a neighborhood of $A$ such that $\mathrm{Cl}\,  U\cap X=\emptyset$. Then $A\in\langle U\rangle\subset \exp(\mathrm{Cone}(X))\setminus\bar J(X)$.
\end{proof}

Let $\xi\colon [0,1]\to[-\infty,\infty]$ be a continuous order-preserving homeomorphism. For every $A\in\bar J(X)$ define $h_X(A)\colon C(X)\to\mathbb R$ as follows. Given $\varphi\in C(X)$,  let  $$h_X(A)(\varphi)=\vee\{\xi(t)\wedge\varphi(x)\mid (x,t)\in A\}.$$
It is easy to show that $h_X(A)\in J(X)$.

\begin{lemma} Let $\varphi\in C(X)$ and $c\in(0,1]$, then for any $\varepsilon>0$ there is $\delta>0$ such that $d((x,t),(y,s))<\delta$ and $t,s>c$ implies $|\varphi(x)-\varphi(y)|<\varepsilon$.
\end{lemma}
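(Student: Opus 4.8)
The plan is to exploit the fact that the hypothesis $t,s>c$ keeps us away from the apex of the cone, where the metric $\check d$ degenerates. The only way a small value of $\check d((x,t),(y,s))$ could fail to force $x$ and $y$ to be close in $(X,d)$ is when the factor $\min\{s,t\}$ is near $0$; the bound $c>0$ rules this out. (I read the symbol $d$ in the statement as the cone metric $\check d$ on $\mathrm{Cone}(X)$, since its arguments are points of the cone.)

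First I would record the elementary inequality coming directly from the definition of $\check d$. Since $t,s>c$ we have $\min\{s,t\}>c$, so that
$$\check d((x,t),(y,s)) = \min\{s,t\}\,d(x,y) + |s-t| \ge \min\{s,t\}\,d(x,y) \ge c\,d(x,y).$$
Consequently, $\check d((x,t),(y,s))<\delta$ together with $t,s>c$ yields $d(x,y) < \delta/c$.

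Next I would invoke the uniform continuity of $\varphi$. As $X$ is a compact metric space and $\varphi\in C(X)$, the function $\varphi$ is uniformly continuous: given $\varepsilon>0$ there is $\eta>0$ such that $d(x,y)<\eta$ implies $|\varphi(x)-\varphi(y)|<\varepsilon$. Setting $\delta = c\eta$ then closes the argument, since $\check d((x,t),(y,s))<\delta$ and $t,s>c$ give $d(x,y)<\delta/c = \eta$, whence $|\varphi(x)-\varphi(y)|<\varepsilon$.

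There is no serious obstacle here; the entire content is the single observation that restricting to heights bounded below by $c$ bounds $\min\{s,t\}$ away from zero, so that the cone metric controls the base metric. I would only remark that the bound $c>0$ is essential: the statement fails at $c=0$, precisely because the fibres $\{x\}\times[0,\eta]$ collapse toward the apex as $\eta\to 0$, and there $\check d$ no longer separates base points.
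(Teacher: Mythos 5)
Your proof is correct and follows exactly the route the paper intends: the paper's own proof is the one-line remark that the claim ``follows from the definition of metric on $\mathrm{Cone}(X)$ and the uniform continuity of $\varphi$,'' and your argument simply makes explicit the key inequality $\check d((x,t),(y,s))\ge \min\{s,t\}\,d(x,y)\ge c\,d(x,y)$ and the choice $\delta=c\eta$. Nothing further is needed.
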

\begin{proof} This follows from the definition of metric on $ \mathrm{Cone}(X)$ and the uniform continuity of $\varphi$.
\end{proof}

\begin{prop}\label{p:homeom} The map $h_X\colon \bar J(X)\to J(X)$ is a homeomorphism.
\end{prop}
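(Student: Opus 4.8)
The plan is to reduce the statement to the assertion that $h_X$ is a continuous bijection and then invoke compactness. Since $\mathrm{Cone}(X)$ is compact (a quotient of $X\times[0,1]$), the hyperspace $\exp(\mathrm{Cone}(X))$ is compact, and by the preceding lemma $\bar J(X)$ is a closed, hence compact, subspace. As $J(X)$ is compact Hausdorff, a continuous bijection $h_X\colon\bar J(X)\to J(X)$ is automatically a homeomorphism, so the continuity of the inverse needs no separate argument.

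For continuity I would fix $\varphi\in C(X)$ and consider the function $F_\varphi\colon\mathrm{Cone}(X)\to[-\infty,\infty]$ given by $F_\varphi(x,t)=\xi(t)\wedge\varphi(x)$, which is continuous (at the cone point it equals $-\infty$, consistently with $\xi(0)=-\infty$). Then $h_X(A)(\varphi)=\max_{(x,t)\in A}F_\varphi(x,t)$, and the max-functional $A\mapsto\max_A F_\varphi$ is continuous on $\exp(\mathrm{Cone}(X))$ with the Vietoris topology: it is upper semicontinuous because $\{A\mid\max_A F_\varphi<c\}=\langle F_\varphi^{-1}(-\infty,c)\rangle$, and lower semicontinuous because $\{A\mid\max_A F_\varphi>c\}=\langle \mathrm{Cone}(X),F_\varphi^{-1}(c,\infty)\rangle$. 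Since the weak$^*$ topology on $J(X)$ is generated by the coordinate maps $\mu\mapsto\mu(\varphi)$, this yields continuity of $h_X$.

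For surjectivity I would avoid building densities by hand and instead use the already established density of $J_\omega(X)$ in $J(X)$. Every finitely supported measure $\mu=\vee_{i=1}^n\lambda_i\wedge\delta_{x_i}$ is realized as $h_X(A)$ for $A=\mathrm{Sat}\{(x_i,\xi^{-1}(\lambda_i))\mid i=1,\dots,n\}$: the condition $\vee_{i=1}^n\lambda_i=\infty$ forces some $\xi^{-1}(\lambda_i)=1$, so $A\in\bar J_\omega(X)\subset\bar J(X)$, and a direct computation (the supremum over the saturated set $A$ is attained at its generators) gives $h_X(A)(\varphi)=\vee_{i=1}^n\lambda_i\wedge\varphi(x_i)=\mu(\varphi)$. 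Hence $J_\omega(X)\subset h_X(\bar J(X))$; as $h_X$ is continuous and $\bar J(X)$ is compact, the image $h_X(\bar J(X))$ is closed in $J(X)$ and therefore contains $\overline{J_\omega(X)}=J(X)$.

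The main obstacle is \emph{injectivity}, which I would treat through the density description of saturated compact sets. Each $A\in\bar J(X)$ is the region below the graph of the function $g_A(x)=\max\{t\mid(x,t)\in A\}$, that is $A=\{(x,t)\mid 0\le t\le g_A(x)\}$; here closedness of $A$ gives that $g_A$ is upper semicontinuous and saturation gives the subgraph form. Since $\xi$ is increasing, one obtains
\[
h_X(A)(\varphi)=\sup_{x\in X}\bigl(\xi(g_A(x))\wedge\varphi(x)\bigr).
\]
If $A\ne B$, say $(x_0,t_0)\in A\setminus B$ with $t_0>0$, then $s:=g_B(x_0)<t_0$; picking $s<\rho'<\rho<t_0$ with $r:=\xi(\rho)$ finite, the set $K=\{x\mid g_B(x)\ge\rho'\}$ is closed and avoids $x_0$, so by Urysohn's lemma there is $\varphi\in C(X)$ with $\varphi(x_0)=r$, $\varphi\le r$ throughout, and $\max_K\varphi<r$. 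A short check then gives $h_X(B)(\varphi)<r\le h_X(A)(\varphi)$. The delicate point is the simultaneous control of $\xi(g_B(x))$ (small off $K$, since $g_B(x)<\rho'$ there) and of $\varphi(x)$ (small on $K$), which is precisely what the subgraph/density reformulation renders transparent. This establishes injectivity, and the compactness reduction of the first paragraph then completes the proof.
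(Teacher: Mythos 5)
Your proof is correct, and its skeleton (continuity, surjectivity from the dense set $J_\omega(X)$ plus compactness of $\bar J(X)$, injectivity, and the compact-to-Hausdorff reduction) coincides with the paper's; the surjectivity step is essentially word-for-word the paper's argument. Where you genuinely diverge is in the two steps the paper leaves sketchy. For continuity the paper only states the goal (``we have to show that $\lim_{n\to\infty}h_X(A_n)(\varphi)=h_X(A)(\varphi)$'') and supplies no argument; your observation that $h_X(A)(\varphi)=\max_A F_\varphi$ for the continuous $F_\varphi(x,t)=\xi(t)\wedge\varphi(x)$ on $\mathrm{Cone}(X)$, combined with the Vietoris subbasic preimages $\langle F_\varphi^{-1}([-\infty,c))\rangle$ and $\langle\mathrm{Cone}(X),F_\varphi^{-1}((c,\infty])\rangle$, is a clean and complete replacement. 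For injectivity the paper picks $(x,t)\in A$ with $O_r(x,t)\cap B=\emptyset$ and separates with a Urysohn function peaking at $x$ (note its $\varphi(x)=\xi(x)$ is a typo for $\xi(t)$); this works, but it needs exactly the ``bounded strictly away from $\xi(t)$'' control that you make explicit. Your route through the upper semicontinuous density $g_B(x)=\max\{t\mid(x,t)\in B\}$ and its closed superlevel set $K$ delivers that control transparently: off $K$ the level coordinate contributes at most $\xi(\rho')<r$, on $K$ the test function is $<r$, hence $h_X(B)(\varphi)<r\le h_X(A)(\varphi)$. Two small points worth recording: the subgraph description $B=\{(x,t)\mid t\le g_B(x)\}$ should be read modulo the cone point, which lies in every element of $\bar J(X)$ (this is also what makes $g_B$ everywhere defined), and in the degenerate case $K=\emptyset$ one simply takes $\varphi\equiv r$ and uses only the off-$K$ estimate.
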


\begin{proof} We first show that the map $h_X$ is continuous. Let $(A_i)$ be a convergent sequence in $\bar J(X)$ and $A=\lim_{i\to\infty}A_i$. Given $\varphi\in C(X)$, we have to show that $\lim_{n\to\infty}h_X(A_n)(\varphi)=h_X(A)(\varphi)$.

Note that $h_X$ is an onto map. Indeed, let $\vee_{i=1}^n\lambda_i=\infty$ such that $\mu=\vee_{i=1}^n\lambda_i\wedge \delta_{x_i}\in J_\omega(X)$. Define $$A=\{(x_i,t)\in \mathrm{Cone}(X)\mid i=1,\dots,n,\ t\le\xi^{-1}(\lambda_i)\}.$$
Clearly, $A\in \bar J(X)$ and $h_X(A)=\mu$. Since $\bar J(X)$ is compact and $J_\omega(X)$ is dense in $J(X)$, we conclude that $h_X(\bar J(X))=J(X)$.

Now, let us prove that $h_X$ is an embedding. Let $A,B\in\bar J(X)$ be such that $d_H(A,B)\ge r>0$. There is $(x,t)\in A$ such that $O_r(x,t)\cap B=\emptyset$ and there are neighborhoods $U$ of $x$ and $V$ of $t$ in $[0,1]$ such that $(U\times V)\cap B=\emptyset$. Without loss of generality, we may suppose that $t<1$. Let $\varphi\in C(X)$ be a function such that $\varphi(x)=\xi(x)$ and $\varphi|(X\setminus U)<\xi(x)$. Then $h_X(A)(\varphi)\ge\xi(x)$ and $h_X(B)(\varphi)<\xi(x)$.

\end{proof}

Recall that a max-plus measure on $X$ is a functional $\mu\colon C(X)\to\mathbb R$ satisfying the conditions:
\begin{enumerate}
\item $\mu(c_X)=c$, $c\in\mathbb R$;
\item $\mu(\varphi\vee\psi)=\mu(\varphi)\vee\mu(\psi)$;
\item $\mu(c+\varphi)=c+\mu(\varphi)$
\end{enumerate}
(see, e.g., \cite{Z}; remark that in \cite{Z} as well as in another papers devoted to the max-plus measures $\oplus$ is used for maximum and $\odot$ for addition).

One can similarly define a map $g_X\colon \bar J(X)\to I(X)$, where $I(X)$ denotes the set of idempotent measures (see \cite{Z}). More precisely, if $A\in \bar J(X)$, let
$$g_X(A)(\varphi)=\vee \{\varphi(x)+\ln t\mid (x,t)\in A\}.$$

The following proposition can be proved similarly as Proposition \ref{p:homeom}.

\begin{prop} The map $g_X\colon \bar J(X)\to I(X)$ is a homeomorphism.
\end{prop}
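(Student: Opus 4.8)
The plan is to mirror the proof of Proposition \ref{p:homeom} verbatim, replacing the function $\xi(t)\wedge\varphi(x)$ by $\varphi(x)+\ln t$ throughout, and to check that every step of that argument survives the substitution. The three essential ingredients of the earlier proof are: (i) $g_X(A)\in I(X)$ for each saturated $A$; (ii) $g_X$ is onto, established by hitting the dense subset of finitely supported idempotent measures; and (iii) $g_X$ is an embedding, established by separating two sets at Hausdorff distance $\ge r$ with a suitably chosen test function. I would verify these in order, and since $\bar J(X)$ is compact (being closed in $\exp(\mathrm{Cone}(X))$ by the Lemma) while $I(X)$ is Hausdorff, a continuous bijection is automatically a homeomorphism, so continuity plus (ii) and (iii) suffice.

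First I would check (i): that $g_X(A)$ satisfies the three max-plus axioms. Normalization $g_X(A)(c_X)=c$ requires $\vee\{c+\ln t\mid (x,t)\in A\}=c$, which holds because $A$ is saturated and contains some $(x,1)$, so $\sup\ln t=\ln 1=0$ over the points of $A$. The max-additivity $g_X(A)(\varphi\vee\psi)=g_X(A)(\varphi)\vee g_X(A)(\psi)$ follows from distributing $\vee$ exactly as in the max-min case, replacing $\wedge$ by $+$. The translation axiom $g_X(A)(c+\varphi)=c+g_X(A)(\varphi)$ is immediate, since $\vee\{(c+\varphi(x))+\ln t\}=c+\vee\{\varphi(x)+\ln t\}$; this is in fact cleaner than the corresponding max-min computation because addition, unlike $\wedge$, commutes freely past the supremum.

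Next I would treat surjectivity. A finitely supported idempotent measure has the form $\mu=\oplus_{i=1}^n\lambda_i+\delta_{x_i}$ with $\vee_{i=1}^n\lambda_i=0$, i.e. $\mu(\varphi)=\vee_{i=1}^n(\lambda_i+\varphi(x_i))$; setting $A=\{(x_i,t)\mid i=1,\dots,n,\ t\le e^{\lambda_i}\}$ gives a saturated finite-generated set with $g_X(A)=\mu$, and the normalization $\vee\lambda_i=0$ guarantees $(x_j,1)\in A$ for the index $j$ attaining the maximum, so $A\in\bar J(X)$. Density of such $\mu$ in $I(X)$ together with compactness of $\bar J(X)$ then yields $g_X(\bar J(X))=I(X)$, as before. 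For the embedding property I would again take $A,B$ with $d_H(A,B)\ge r$, pick $(x,t)\in A$ with $O_r(x,t)\cap B=\emptyset$ and a product neighborhood $U\times V$ of $(x,t)$ missing $B$, and choose $\varphi\in C(X)$ peaking at $x$ with $\varphi|(X\setminus U)$ small enough that the contribution $\varphi(x)+\ln t$ from $(x,t)$ is not matched by any point of $B$; separating the two real values $g_X(A)(\varphi)$ and $g_X(B)(\varphi)$ then forces $g_X$ to be injective with continuous inverse.

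I do not expect a genuine obstacle here, since the substitution $(\,\cdot\,)\wedge(\,\cdot\,)\mapsto(\,\cdot\,)+(\,\cdot\,)$ is algebraically parallel and the topological scaffolding (the Lemma giving closedness of $\bar J(X)$, compactness, density of the finite measures) is reused unchanged. The one point deserving slight care is the embedding step: in the max-min setting the bound $\varphi|(X\setminus U)<\xi(x)$ worked because $\wedge$ caps contributions at the function value, whereas with $+$ one must instead ensure that even with the extra additive term $\ln t\le 0$, no point of $B$ can reach the target level $\varphi(x)+\ln t$. This is arranged by making $\varphi$ sufficiently negative off $U$ and using that $\ln t$ is bounded above by $0$; the uniform-continuity Lemma controls the behavior of $\varphi$ on the relevant part of $B$ near level $t$. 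Once this quantitative estimate is pinned down, the separation of the two suprema goes through exactly as in Proposition \ref{p:homeom}.
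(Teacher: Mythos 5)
Your proposal is correct and follows exactly the route the paper intends: the paper gives no separate argument, stating only that the result ``can be proved similarly as Proposition \ref{p:homeom},'' and your substitution of $\varphi(x)+\ln t$ for $\xi(t)\wedge\varphi(x)$ together with the checks of normalization (via the point $(x,1)\in A$), surjectivity onto the dense set of finitely supported measures via $A=\{(x_i,t)\mid t\le e^{\lambda_i}\}$, and the separation argument for injectivity is precisely that adaptation. Your remark that the embedding step needs $\varphi$ to be driven sufficiently negative off $U$ (since $+$ does not cap contributions the way $\wedge$ does, while $\ln t\le 0$ still bounds them above) correctly identifies the only non-mechanical point.
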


\begin{cor} The spaces $J(X)$ and $I(X)$ are homeomorphic.
\end{cor}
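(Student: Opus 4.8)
The plan is to read the homeomorphism off the two immediately preceding propositions, which share the common domain $\bar J(X)$. Proposition \ref{p:homeom} furnishes a homeomorphism $h_X\colon \bar J(X)\to J(X)$, and the proposition just above furnishes a homeomorphism $g_X\colon \bar J(X)\to I(X)$. Since $h_X$ is invertible and a composition of homeomorphisms is again a homeomorphism, the map
$$g_X\circ h_X^{-1}\colon J(X)\to I(X)$$
is the desired homeomorphism. For a metric space $X$ with $\mathrm{diam}(X)\le1$ there is therefore nothing further to verify, and since every compact metrizable space admits a compatible metric of diameter at most $1$ (rescale), the statement holds for all compact metrizable $X$.

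To cover an arbitrary compact Hausdorff space $X$ I would pass to inverse limits. Writing $X=\varprojlim\{X_\alpha,p_{\alpha\beta};\mathcal A\}$ with all $X_\alpha$ compact metrizable, one wants to realize $J(X)$ and $I(X)$ as the limits $\varprojlim J(S)$ and $\varprojlim I(S)$ of the corresponding systems of metrizable factors, using the continuity of the functor $J$ (Proposition \ref{p:homeo}) together with the analogous continuity of $I$ established in \cite{Z}. The level homeomorphisms $g_{X_\alpha}\circ h_{X_\alpha}^{-1}\colon J(X_\alpha)\to I(X_\alpha)$ should then induce a homeomorphism of the limit spaces.

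I expect the genuine substance to lie entirely in the two preceding propositions (that $h_X$ and $g_X$ are homeomorphisms); the corollary itself is formal for the metrizable case. If the general compact Hausdorff case is intended, the main obstacle is the compatibility of the level homeomorphisms with the bonding maps: one must check that $g_{X_\beta}\circ h_{X_\beta}^{-1}\circ J(p_{\alpha\beta})=I(p_{\alpha\beta})\circ g_{X_\alpha}\circ h_{X_\alpha}^{-1}$, i.e. that the cone construction and the maps $h$ and $g$ are natural in $X$. Verifying this amounts to tracing a saturated set through $\mathrm{Cone}(p_{\alpha\beta})$ and confirming that applying $h$ or $g$ commutes with this passage, which is the only step requiring more than the immediate pointwise composition.
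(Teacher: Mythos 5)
Your argument is exactly the paper's: the corollary is read off immediately as the composition $g_X\circ h_X^{-1}$ of the two homeomorphisms from the preceding propositions, and the paper supplies no further proof. Your additional remarks on the non-metrizable case via inverse limits and naturality go beyond what the paper records here, but the essential approach is the same.
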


We will show even more, namely, that the functors $J$ and $I$ are isomorphic. Given a map $f\colon X\to Y$, we will first show that the diagram $$\xymatrix{\bar J(X)\ar[r]^{h_X}\ar[d]_{\bar J(f)} & J(X)\ar[d]^{J(f)}\\
\bar J(Y)\ar[r]_{h_Y}& J(Y)}$$
is commutative. Let $A\in \bar J(X)$ and $\varphi\in C(Y)$, then \begin{align*}(J(f)h_X(A))(\varphi)=&h_X(A)(\varphi f)=\vee\{\varphi f(x)\wedge\xi(t)\mid (x,t)\in A\}\\ =&\vee \{\varphi(y)\wedge\xi(t)\mid (y,t)\in\bar J(f)(A)\}=h_Y\bar J(f)(A)(\varphi).\end{align*}
This shows that $h=(h_X)$ is a natural transformation. One can similarly prove that $g=(g_X)$ is a natural transformation.  Therefore, we obtain the following
\begin{prop} \label{p:iso} The functors $I$ and $J$ are isomorphic.
\end{prop}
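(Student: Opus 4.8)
The plan is to assemble the isomorphism of functors from the pieces already in hand. By the two preceding propositions, for each object $X$ the maps $h_X\colon \bar J(X)\to J(X)$ and $g_X\colon \bar J(X)\to I(X)$ are homeomorphisms. The natural candidate for an isomorphism $I\to J$ is therefore the family $\Phi_X=h_X\circ g_X^{-1}\colon I(X)\to J(X)$, which is a homeomorphism for each $X$ as a composite of homeomorphisms. To promote this pointwise family to a natural isomorphism of functors, I must verify that $\Phi=(\Phi_X)$ is a natural transformation, i.e., that for every continuous $f\colon X\to Y$ the square relating $\Phi_X$, $\Phi_Y$, $I(f)$ and $J(f)$ commutes.

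The key observation is that naturality of $\Phi$ reduces to naturality of $h$ and of $g$ through the common intermediary $\bar J$. The excerpt has already shown that the square with $h_X$, $h_Y$, $\bar J(f)$, $J(f)$ commutes, so that $h=(h_X)$ is a natural transformation $\bar J\Rightarrow J$; and it asserts that $g=(g_X)$ is likewise a natural transformation $\bar J\Rightarrow I$. First I would record explicitly that $\bar J$ is a functor: on a map $f\colon X\to Y$ one defines $\bar J(f)\colon \bar J(X)\to\bar J(Y)$ by sending a saturated compact set $A\subset\mathrm{Cone}(X)$ to $\mathrm{Sat}\big(\{(f(x),t)\mid (x,t)\in A\}\big)$, and one checks functoriality and continuity. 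Granting this, naturality of $g$ gives $I(f)\circ g_X=g_Y\circ\bar J(f)$, hence $I(f)=g_Y\circ\bar J(f)\circ g_X^{-1}$, and naturality of $h$ gives $J(f)\circ h_X=h_Y\circ\bar J(f)$.

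With these two identities the computation is immediate:
\begin{align*}
J(f)\circ\Phi_X
&=J(f)\circ h_X\circ g_X^{-1}
=h_Y\circ\bar J(f)\circ g_X^{-1}\\
&=h_Y\circ g_Y^{-1}\circ g_Y\circ\bar J(f)\circ g_X^{-1}
=h_Y\circ g_Y^{-1}\circ I(f)
=\Phi_Y\circ I(f),
\end{align*}
so the naturality square commutes. Since each $\Phi_X$ is a homeomorphism and $\Phi$ is natural, $\Phi$ is a natural isomorphism, and its inverse $(\Phi_X^{-1})=(g_X\circ h_X^{-1})$ is automatically natural as well; hence $I$ and $J$ are isomorphic functors.

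I expect the only genuine obstacle to be the bookkeeping around $\bar J$ as a functor and the precise definition of $\bar J(f)$ on saturated sets. The commutativity diagram in the excerpt already presupposes that $\bar J(f)$ is given, and its computation silently uses that $(y,t)\in\bar J(f)(A)$ is controlled by $\{(f(x),t)\mid (x,t)\in A\}$; I would make this definition explicit and confirm it lands in $\bar J(Y)$ (saturation is preserved, and $(f(x),1)\in\bar J(f)(A)$ whenever $(x,1)\in A$) and is continuous with respect to the Vietoris topology. Everything else is formal diagram chasing, since the hard analytic content — that $h_X$ and $g_X$ are homeomorphisms and that $h,g$ are natural — is supplied by the earlier results.
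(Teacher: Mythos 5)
Your proposal is correct and follows essentially the same route as the paper: both establish that $h=(h_X)\colon\bar J\Rightarrow J$ and $g=(g_X)\colon\bar J\Rightarrow I$ are natural transformations whose components are homeomorphisms, and then obtain the isomorphism $I\cong J$ by composing $h_X\circ g_X^{-1}$. Your extra care in spelling out the definition and functoriality of $\bar J(f)$ fills in a step the paper leaves implicit, but the argument is the same.
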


\begin{rem}\label{r:density} M. Akian \cite{Ak} considered in details the notion of density of an idempotent (max-plus) measure. The density is shown to be an upper-semicontinuous function defined on the space under consideration.  For the reasons of topologization (metrization), it is more convenient to consider the subgraph of this function. Then the densities turn to be elements of the hyperspace of the considered space multiplied by the segment $[-\infty,0]$. However, in this way we do not obtain a subfunctor of the functor $\exp((-)\times[0,\infty])$, as the image of the subgraph of a function is not necessarily the subgraph of a function (defined on the whole space).  That is why we have modified the construction and passed to the hyperspace of the cone.
\end{rem}

\begin{rem} Proposition \ref{p:iso} allows us to claim that the functor $J$ is normal in the sense of Shchepin \cite{S}; in particular, $J$ is continuous, i.e., one can drop the condition of zero-dimensionality in Proposition  \ref{p:homeo}.
\end{rem}

\section{Monads}
A monad on a category $\mathcal C$ is a triple $\mathbb T=(T,\eta,\psi)$, where $T\colon  \mathcal C\to\mathcal C$ is a functor,
$\eta\colon 1_{\mathcal C}\to T$ (unit), $\psi\colon T^2=TT\to T$ (multiplication) are natural transformations satisfying the properties:
$\psi T(\eta)=\psi \eta_T=1_T$ (two-side unit), $\psi\psi_T=\psi T(\psi)$ (associativity).

Two monads,  $\mathbb T=(T,\eta,\psi)$, $\mathbb T'=(T',\eta',\psi')$, on a category $\mathcal C$ are {\em isomorphic} if there exists an isomorphism
of functors $k\colon T\to T'$ such that $k\eta=\eta'$ and $k\psi=\psi'k_{T'}T(k)=\psi'T'(k)k_T$.

See, e.g., \cite{BW} for detailed exposition of the monad theory.

For any $X$ define $\eta_X\colon X\to J(X)$ by the formula $\eta_X(x)=\delta_x$.
\begin{prop} The map $\eta_X$ is continuous.
\end{prop}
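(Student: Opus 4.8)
The plan is to show that $\eta_X(x) = \delta_x$ is continuous by appealing directly to the weak* topology on $J(X)$ and the definition of the Dirac functional. Recall that a base for the weak* topology on $J(X)$ consists of sets of the form $O(\mu;\varphi_1,\dots,\varphi_n;\varepsilon)$, so continuity of $\eta_X$ at a point $x_0\in X$ amounts to checking that for every finite collection $\varphi_1,\dots,\varphi_n\in C(X)$ and every $\varepsilon>0$, the preimage under $\eta_X$ of the basic neighborhood $O(\delta_{x_0};\varphi_1,\dots,\varphi_n;\varepsilon)$ is open in $X$.

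First I would unwind the definitions. Since $\delta_x(\varphi)=\varphi(x)$ for each $\varphi\in C(X)$, the condition $\eta_X(x)\in O(\delta_{x_0};\varphi_1,\dots,\varphi_n;\varepsilon)$ is simply the requirement that $|\varphi_i(x)-\varphi_i(x_0)|<\varepsilon$ for all $i=1,\dots,n$. Thus $\eta_X^{-1}(O(\delta_{x_0};\varphi_1,\dots,\varphi_n;\varepsilon)) = \bigcap_{i=1}^n \{x\in X \mid |\varphi_i(x)-\varphi_i(x_0)|<\varepsilon\}$, which is a finite intersection of preimages of open intervals under the continuous functions $\varphi_i$, hence open. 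This establishes continuity at $x_0$, and since $x_0$ was arbitrary, the map $\eta_X$ is continuous.

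There is essentially no obstacle here: the entire content is that the weak* topology is, by construction, the initial topology making each evaluation map $\mu\mapsto\mu(\varphi)$ continuous, and $\eta_X$ pulls each such evaluation back to the continuous function $\varphi$ itself. The only small bookkeeping step is verifying that $\delta_x$ is indeed a max-min measure so that $\eta_X$ genuinely lands in $J(X)$, but this was already noted in the text immediately after the definition of $J(X)$. Accordingly, the proof reduces to the one-line observation that $\iota(\eta_X(x)) = (\varphi(x))_{\varphi\in C(X)}$ depends continuously on $x$, each coordinate being a continuous function of $x$.
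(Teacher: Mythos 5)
Your argument is correct and is essentially identical to the paper's own proof, which likewise establishes continuity via the formula $\eta_X^{-1}(O(\delta_x;\varphi_1,\dots,\varphi_n;\varepsilon))=\bigcap_{i=1}^n\varphi_i^{-1}((\varphi_i(x)-\varepsilon,\varphi_i(x)+\varepsilon))$. No issues.
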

\begin{proof} The assertion is a consequence of the formula  $$\eta_X^{-1}(O(\delta_x;\varphi_1,\dots,\varphi_n;\varepsilon))=\bigcap_{i=1}^n\varphi_i^{-1}((\varphi_i(x)-\varepsilon,\varphi_i(x)+\varepsilon)).$$
\end{proof}
Given $\varphi\in C(X)$, define $\bar\varphi\colon J(X)\to\mathbb R$ by the formula $\bar\varphi(\mu)=\mu(\varphi)$, $\mu\in J(X)$.
\begin{lemma} $\bar\varphi\in C(J(X))$.
\end{lemma}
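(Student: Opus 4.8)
The goal is to show that for every $\varphi\in C(X)$, the function $\bar\varphi\colon J(X)\to\mathbb R$ given by $\bar\varphi(\mu)=\mu(\varphi)$ is continuous, where $J(X)$ carries the weak* topology. The plan is to observe that this is essentially immediate from the very definition of the weak* topology, since that topology was declared to be the coarsest one making all such evaluation maps continuous. More concretely, I would argue directly from the neighborhood base. Fix $\mu_0\in J(X)$ and $\eps>0$. The set $O(\mu_0;\varphi;\eps)=\{\nu\in J(X)\mid |\mu_0(\varphi)-\nu(\varphi)|<\eps\}$ is a basic open neighborhood of $\mu_0$ in $J(X)$, and by construction every $\nu$ in it satisfies $|\bar\varphi(\nu)-\bar\varphi(\mu_0)|<\eps$. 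Hence $\bar\varphi(O(\mu_0;\varphi;\eps))\subset(\bar\varphi(\mu_0)-\eps,\bar\varphi(\mu_0)+\eps)$, which is exactly continuity of $\bar\varphi$ at $\mu_0$.

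Alternatively, and perhaps more transparently, I would invoke the embedding $\iota$ from Proposition \ref{p:iota}, under which $J(X)$ is identified with its image in $\prod_{\varphi\in C(X)}\mathbb R_\varphi$. Under this identification $\bar\varphi$ is simply the restriction to $J(X)$ of the coordinate projection $\pr_\varphi\colon\prod_{\psi\in C(X)}\mathbb R_\psi\to\mathbb R_\varphi$ onto the $\varphi$-th factor. Since $\iota$ is an embedding and every coordinate projection of a product is continuous, the composite $\bar\varphi=\pr_\varphi\circ\iota$ is continuous. This is the cleanest route, as it reduces the claim to the standard fact that projections out of a Tychonoff product are continuous.

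The two approaches are really the same observation phrased differently; I do not anticipate any genuine obstacle here, as the statement is a tautological consequence of how the weak* topology on $J(X)$ was set up. The only point requiring a word of care is that $\bar\varphi$ takes values in $\mathbb R$ rather than $[-\infty,\infty]$: by Proposition \ref{p:iota} we know $|\mu(\varphi)|\le\|\varphi\|$ for every $\mu\in J(X)$, so $\bar\varphi$ is genuinely a bounded real-valued function and the codomain $\mathbb R$ is legitimate. With that remark in place, the continuity statement $\bar\varphi\in C(J(X))$ follows.
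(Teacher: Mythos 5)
Your argument is correct and coincides with the paper's proof, which establishes continuity via the identity $\bar\varphi^{-1}((\bar\varphi(\mu)-\varepsilon,\bar\varphi(\mu)+\varepsilon))=O(\mu;\varphi;\varepsilon)$ — exactly your first, neighborhood-base computation. The reformulation via the coordinate projection $\pr_\varphi\circ\iota$ is the same observation in different words, and the remark on boundedness of $\bar\varphi$ is a nice touch but not essential.
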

\begin{proof} This follows from the formula $$\bar\varphi^{-1}((\bar\varphi(\mu)-\varepsilon, \bar\varphi(\mu)+\varepsilon))=O(\mu;\varphi;\varepsilon),\ \mu\in J(X).$$
\end{proof}

Given $M\in J^2(X)$, define $\psi_X(M)\colon C(X)\to \mathbb R$ by the formula $\psi_X(M)(\varphi)=M(\varphi)$, $\varphi\in C(X)$.
\begin{prop} $\psi_X(M)\in J(X)$.
\end{prop}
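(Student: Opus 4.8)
The plan is to verify directly that $\psi_X(M)$, defined by $\psi_X(M)(\varphi) = M(\bar\varphi)$ for $\varphi \in C(X)$, satisfies the three defining conditions of a max-min measure. Here I read the displayed formula $\psi_X(M)(\varphi)=M(\varphi)$ as shorthand for $M(\bar\varphi)$, since $M \in J^2(X) = J(J(X))$ is a functional on $C(J(X))$ and $\bar\varphi \in C(J(X))$ is the function $\mu \mapsto \mu(\varphi)$ established in the preceding lemma. The key observation driving every step is that the assignment $\varphi \mapsto \bar\varphi$ is compatible with the lattice operations and constants appearing in the definition, so that the max-min structure on $C(X)$ transports through to $C(J(X))$ and is then preserved by $M$.

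First I would check condition (1). I claim $\overline{c_X} = c_{J(X)}$: indeed $\overline{c_X}(\mu) = \mu(c_X) = c$ for every $\mu \in J(X)$, using condition (1) for $\mu$ itself. Hence $\psi_X(M)(c_X) = M(\overline{c_X}) = M(c_{J(X)}) = c$, the last equality being condition (1) for $M$. Next, for condition (2) I would establish the pointwise identity $\overline{\varphi \vee \psi} = \bar\varphi \vee \bar\psi$ in $C(J(X))$: evaluating at $\mu$ gives $\overline{\varphi\vee\psi}(\mu) = \mu(\varphi\vee\psi) = \mu(\varphi)\vee\mu(\psi) = \bar\varphi(\mu)\vee\bar\psi(\mu)$, where the middle step is condition (2) for $\mu$. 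Applying $M$ and invoking condition (2) for $M$ then yields
\begin{align*}
\psi_X(M)(\varphi\vee\psi) &= M(\overline{\varphi\vee\psi}) = M(\bar\varphi\vee\bar\psi)\\
&= M(\bar\varphi)\vee M(\bar\psi) = \psi_X(M)(\varphi)\vee\psi_X(M)(\psi).
\end{align*}

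For condition (3), the analogous identity is $\overline{c\wedge\varphi} = c \wedge \bar\varphi$, which follows from $\overline{c\wedge\varphi}(\mu) = \mu(c\wedge\varphi) = c\wedge\mu(\varphi) = c\wedge\bar\varphi(\mu)$ by condition (3) for $\mu$; here one must note that the constant $c$ on the right is the function $c_{J(X)}$, matching the form in condition (3) for the functional $M$. Then $\psi_X(M)(c\wedge\varphi) = M(c\wedge\bar\varphi) = c\wedge M(\bar\varphi) = c\wedge\psi_X(M)(\varphi)$, completing the verification that $\psi_X(M) \in J(X)$.

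The argument is essentially bookkeeping, so there is no deep obstacle; the one point requiring genuine care is the reinterpretation of the compressed notation $M(\varphi)$ as $M(\bar\varphi)$ and the verification that each transported function ($\overline{c_X}$, $\overline{\varphi\vee\psi}$, $\overline{c\wedge\varphi}$) indeed lands in $C(J(X))$ with the claimed closed form, so that $M$ may legitimately be applied to it. This well-definedness is exactly what the preceding lemma ($\bar\varphi \in C(J(X))$) secures, together with the fact that $J(X)$ is compact Hausdorff so that $C(J(X))$ is the appropriate domain for $M$; once these identifications are in place, naturality of the three pointwise identities makes the three conditions fall out mechanically.
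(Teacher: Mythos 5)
Your proof is correct and follows essentially the same route as the paper: both interpret $\psi_X(M)(\varphi)$ as $M(\bar\varphi)$, establish the pointwise identities $\overline{c_X}=c_{J(X)}$, $\overline{\varphi\vee\psi}=\bar\varphi\vee\bar\psi$, $\overline{c\wedge\varphi}=c\wedge\bar\varphi$, and then invoke the three max-min measure axioms for $M\in J(J(X))$. Your write-up is merely more explicit about verifying those identities, which the paper states without proof.
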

\begin{proof}
 1) Clearly, $\psi_X(M)(c)=M(c)=c$, for every $c\in\mathbb R$.

2) Since, for all $\varphi_1,\varphi_2\in C(X)$,  $\bar\varphi_1\vee\bar\varphi_2=\overline{\varphi_1\vee\varphi_2}$, we see that \begin{align*}\psi_X(M)(\varphi_1\vee\varphi_2)=&M(\overline{\varphi_1\vee\varphi_2})=M(\bar\varphi_1\vee\bar\varphi_2)=M(\bar\varphi_1)\vee M(\bar\varphi_2)\\ =&\psi_X(M)(\varphi_1)\vee\psi_X(M)(\varphi_2).\end{align*}

3) Since $\overline{\lambda\wedge\varphi}=\lambda\wedge\bar\varphi$, we see that
$$\psi_X(M)(\lambda\wedge\varphi)=M(\overline{\lambda\wedge\varphi})=M(\lambda\wedge\bar\varphi)=\lambda\wedge M(\bar\varphi)=\lambda\wedge \psi_X(M)(\varphi).$$
\end{proof}
\begin{prop} The map $\psi_X\colon J^2(X)\to J(X)$ is continuous.
\end{prop}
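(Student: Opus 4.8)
The plan is to prove the continuity of $\psi_X\colon J^2(X)\to J(X)$ by reducing it to the continuity of the evaluation functionals, exactly as was done for the map $\iota$ and for $\eta_X$ earlier. Recall that $J(X)$ carries the weak* topology, whose subbasic open sets are of the form $O(\mu;\varphi;\eps)$. Since $J^2(X) = J(J(X))$ also carries the weak* topology, and since a continuous real-valued function on $J(X)$ is precisely a coordinate function relative to which the weak* topology on $J^2(X)$ is generated, the natural strategy is to show that for each fixed $\varphi\in C(X)$ the composition $M \mapsto \psi_X(M)(\varphi)$ is continuous on $J^2(X)$.

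The key observation, already established in the excerpt, is the Lemma asserting $\bar\varphi\in C(J(X))$ for every $\varphi\in C(X)$. This is exactly what lets the reduction go through. By definition $\psi_X(M)(\varphi) = M(\bar\varphi)$, so the map $M\mapsto\psi_X(M)(\varphi)$ is nothing but the evaluation of $M\in J^2(X)=J(J(X))$ at the fixed continuous function $\bar\varphi\in C(J(X))$. First I would note that evaluation at a fixed continuous function is continuous with respect to the weak* topology: this is immediate from the definition of the basic neighborhoods $O(M_0;\bar\varphi;\eps)$ of $J^2(X)$.

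Concretely, I would argue as follows. Fix $M_0\in J^2(X)$, fix $\varphi_1,\dots,\varphi_n\in C(X)$, and fix $\eps>0$; these determine a basic neighborhood $O(\psi_X(M_0);\varphi_1,\dots,\varphi_n;\eps)$ of $\psi_X(M_0)$ in $J(X)$. Since each $\bar\varphi_i\in C(J(X))$ by the Lemma, the set $O(M_0;\bar\varphi_1,\dots,\bar\varphi_n;\eps)$ is a basic neighborhood of $M_0$ in $J^2(X)$. For any $M$ in this neighborhood we have $|M(\bar\varphi_i)-M_0(\bar\varphi_i)|<\eps$ for each $i$, that is, $|\psi_X(M)(\varphi_i)-\psi_X(M_0)(\varphi_i)|<\eps$, so $\psi_X(M)\in O(\psi_X(M_0);\varphi_1,\dots,\varphi_n;\eps)$. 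This shows the preimage of each basic neighborhood contains a basic neighborhood, establishing continuity.

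I do not anticipate any serious obstacle here, since the heavy lifting was done in proving $\bar\varphi\in C(J(X))$; the present statement is essentially a formal consequence of the way the weak* topologies on $J(X)$ and $J^2(X)$ interact through this membership. The only point requiring a moment of care is to make sure that the functions used to define the subbasic neighborhoods of $J^2(X)$ are genuinely elements of $C(J(X))$ (rather than arbitrary functionals), and this is precisely what the Lemma guarantees; beyond that the verification is the same containment-of-neighborhoods argument used for $\iota$ and $\eta_X$.
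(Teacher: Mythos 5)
Your argument is correct and is essentially identical to the paper's own proof: the paper likewise verifies the inclusion $\psi_X(O(M;\bar\varphi_1,\dots,\bar\varphi_n;\varepsilon))\subset O(\psi_X(M);\varphi_1,\dots,\varphi_n;\varepsilon)$, with the preceding Lemma guaranteeing $\bar\varphi_i\in C(J(X))$ so that these are legitimate basic neighborhoods in $J^2(X)$. Nothing further is needed.
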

\begin{proof} Let $\mu=\psi_X(M)$, where $M\in J^2(X)$. Given $\varphi_1,\dots,\varphi_n\in C(X)$, the continuity follows from the formula
$$\psi_X(O( M; \bar\varphi_1,\dots,\bar\varphi_n;\varepsilon))\subset O( \mu; \varphi_1,\dots,\varphi_n;\varepsilon).$$

\end{proof}

Since the functors $J$ and $I$ are isomorphic, the notion of support is defined for $J$. Namely, given $\mu\in J(X)$, we say that the support of $\mu$ is the set $$\mathrm{supp}(\mu)=\bigcap\{A\in\exp X\mid \mu\in J(A)\subset J(X)\}\in\exp X.$$

Let $J_\omega(X)$ denote the set $\{\mu\in J(X)\mid \mathrm{supp}(\mu)\text{ is finite}\}$. By $J_\omega^2(X)$ we denote the set $\{\mu\in J(X)\mid \mathrm{supp}(\mu)\subset J_\omega(X)\}$. Similarly, let $$J_\omega^3(X)=\{\mu\in J(X)\mid \mathrm{supp}(\mu)\subset J_\omega^2(X)\}.$$

\begin{prop} $\psi=(\psi_X)$ is a natural transformation.
\end{prop}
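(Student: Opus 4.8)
The plan is to show that for every continuous map $f\colon X\to Y$ the naturality square
$$\xymatrix{J^2(X)\ar[r]^{\psi_X}\ar[d]_{J^2(f)} & J(X)\ar[d]^{J(f)}\\
J^2(Y)\ar[r]_{\psi_Y}& J(Y)}$$
commutes, i.e.\ $J(f)\psi_X=\psi_Y J^2(f)$ as maps $J^2(X)\to J(Y)$. Since both composites land in $J(Y)$ and two max-min measures agree iff they agree on every $\varphi\in C(Y)$, it suffices to evaluate both sides at an arbitrary $M\in J^2(X)$ and an arbitrary $\varphi\in C(Y)$. The whole argument is then a matter of unwinding the definitions of $\psi$, $J(f)$, $J^2(f)=J(J(f))$, and the ``bar'' operation $\varphi\mapsto\bar\varphi$.

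First I would fix notation. Recall that for $\varphi\in C(X)$ the function $\bar\varphi\in C(J(X))$ is given by $\bar\varphi(\mu)=\mu(\varphi)$, and that $\psi_X(M)(\varphi)=M(\bar\varphi)$. The key bookkeeping identity is the relation between barring and $J(f)$: for $\varphi\in C(Y)$ one has, for every $\mu\in J(X)$,
$$\overline{\varphi f}(\mu)=\mu(\varphi f)=J(f)(\mu)(\varphi)=\bar\varphi(J(f)(\mu))=(\bar\varphi\circ J(f))(\mu),$$
so that $\overline{\varphi f}=\bar\varphi\circ J(f)$ as elements of $C(J(X))$. This is the crucial compatibility and I expect it to be the main (and really the only) point requiring care, since it is what lets one commute the averaging operation past $f$. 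Everything else is formal.

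With this in hand the computation is short. On one side,
$$(J(f)\psi_X(M))(\varphi)=\psi_X(M)(\varphi f)=M(\overline{\varphi f})=M(\bar\varphi\circ J(f)).$$
On the other side, writing $N=J^2(f)(M)=J(J(f))(M)\in J^2(Y)$, the definition of $J$ on morphisms gives $N(\Phi)=M(\Phi\circ J(f))$ for $\Phi\in C(J(Y))$, and therefore
$$(\psi_Y J^2(f)(M))(\varphi)=\psi_Y(N)(\varphi)=N(\bar\varphi)=M(\bar\varphi\circ J(f)).$$
Since the two right-hand sides coincide for every $\varphi\in C(Y)$, we conclude $J(f)\psi_X(M)=\psi_Y J^2(f)(M)$ for every $M\in J^2(X)$, which is exactly the naturality of $\psi=(\psi_X)$. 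The continuity of each $\psi_X$ was already established, so no further topological verification is needed.
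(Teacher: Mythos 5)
Your proof is correct, but it takes a genuinely different route from the paper's. The paper proves naturality by restricting to the dense subset $J_\omega^2(X)$ of measures of the form $M=\vee_{i=1}^n\alpha_i\wedge\delta_{\mu_i}$, computing both composites explicitly on such $M$ via the formula $\psi_X(\vee_i\alpha_i\wedge\delta_{\mu_i})=\vee_i\alpha_i\wedge\mu_i$, and then invoking density together with continuity of all maps involved to extend the identity to all of $J^2(X)$. You instead give a direct, purely formal computation valid for an arbitrary $M\in J^2(X)$, whose only substantive ingredient is the compatibility $\overline{\varphi f}=\bar\varphi\circ J(f)$ in $C(J(X))$, after which both composites evaluate to $M(\bar\varphi\circ J(f))$. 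Your argument is cleaner and more self-contained: it does not depend on the density of $J_\omega^2(X)$ (which in the paper rests on the Milyutin-map machinery and the identification of supports via the isomorphism with $I$), nor on the explicit description of $\psi_X$ on finitely supported measures. What the paper's approach buys is consistency with the rest of that section, where the same density-plus-continuity template is reused to verify the monad axioms; but for naturality specifically, your direct computation is the more economical proof. One small point of care: your identity $N(\Phi)=M(\Phi\circ J(f))$ is exactly the definition of $J$ on the morphism $J(f)$, and $\bar\varphi\circ J(f)$ does lie in $C(J(X))$ because $\bar\varphi$ is continuous (the paper's lemma) and $J(f)$ is continuous, so every step is justified.
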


\begin{proof}
 Let $f\colon X\to Y$ be a map. We are going to show that the diagram
$$\xymatrix{J^2(X)\ar[r]^{J^2(f)}\ar[d]_{\psi_X} & J^2(Y)\ar[d]^{\psi_Y}\\
J(X)\ar[r]_{J(f)} & J(Y)}$$
is commutative.

Since the set $J_\omega^2(X)$  is dense in $J(X)$, it is sufficient to verify the commutativity of the diagram for $\mu\in J_\omega^2(X)$. Let $M\in J_\omega^2(X)$, then there are $\alpha_1,\dots,\alpha_n\in[-\infty,\infty]$ and $\mu_1,\dots,\mu_n\in J_\omega(X)$ such that  $\vee_{i=1}^n\alpha_n=\infty$ and $M=\vee_{i=1}^n\alpha_i\wedge\delta_{\mu_i}$.

Then \begin{align*}\psi_YJ^2(f)(M)=&\psi_YJ^2(f)(\vee_{i=1}^n\alpha_i\wedge\delta_{\mu_i})=\psi_Y(\vee_{i=1}^n\alpha_i\wedge\delta_{J(f)(\mu_i)})=\vee_{i=1}^n\alpha_i\wedge J(f)(\mu_i)\\=&J(f)(\vee_{i=1}^n\alpha_i\wedge \mu_i)=J(f)\psi_X(M)\end{align*}
and we are done.
\end{proof}

\begin{thm}
The triple $\mathbb J=(J,\eta,\psi)$ is a monad on the category of compact metrizable spaces and continuous maps.
\end{thm}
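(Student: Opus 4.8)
The plan is to verify the two monad axioms, since the three ingredients are already in place: $J$ is a functor, and the preceding propositions show that $\eta=(\eta_X)$ and $\psi=(\psi_X)$ are natural transformations. (The naturality of $\eta$, which is not stated separately above, follows at once from $J(f)(\delta_x)(\varphi)=\delta_x(\varphi f)=\varphi(f(x))=\delta_{f(x)}(\varphi)$.) What remains is the two-sided unit law $\psi_X J(\eta_X)=\psi_X\eta_{J(X)}=1_{J(X)}$ and the associativity $\psi_X\psi_{J(X)}=\psi_X J(\psi_X)$. The key technical fact, already available as the lemma $\bar\varphi\in C(J(X))$, is that the ``bar'' operation $\varphi\mapsto\bar\varphi$ carries $C(X)$ into $C(J(X))$; applying it a second time yields $\overline{\bar\varphi}\in C(J^2(X))$, so that every functional appearing below is evaluated on a legitimate continuous test function.

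For the unit laws I would evaluate on an arbitrary $\varphi\in C(X)$ and $\mu\in J(X)$. For the right unit, $\eta_{J(X)}(\mu)=\delta_\mu$, whence $\psi_X(\delta_\mu)(\varphi)=\delta_\mu(\bar\varphi)=\bar\varphi(\mu)=\mu(\varphi)$, so $\psi_X\eta_{J(X)}=1_{J(X)}$. For the left unit, observe first that $\bar\varphi\eta_X=\varphi$, since $\bar\varphi(\eta_X(x))=\bar\varphi(\delta_x)=\delta_x(\varphi)=\varphi(x)$; therefore $\psi_X(J(\eta_X)(\mu))(\varphi)=J(\eta_X)(\mu)(\bar\varphi)=\mu(\bar\varphi\eta_X)=\mu(\varphi)$, giving $\psi_X J(\eta_X)=1_{J(X)}$.

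For associativity I would take $\mathcal M\in J^3(X)$ and $\varphi\in C(X)$ and reduce both composites to the single functional $\overline{\bar\varphi}$. On one hand, by the definition of $\psi_{J(X)}$ we have $\psi_{J(X)}(\mathcal M)(\bar\varphi)=\mathcal M(\overline{\bar\varphi})$, so $\psi_X(\psi_{J(X)}(\mathcal M))(\varphi)=\mathcal M(\overline{\bar\varphi})$. On the other hand, the identity $\bar\varphi\,\psi_X=\overline{\bar\varphi}$ holds as functions on $J^2(X)$, since $(\bar\varphi\,\psi_X)(M)=\bar\varphi(\psi_X(M))=\psi_X(M)(\varphi)=M(\bar\varphi)=\overline{\bar\varphi}(M)$ for every $M\in J^2(X)$; hence $\psi_X(J(\psi_X)(\mathcal M))(\varphi)=J(\psi_X)(\mathcal M)(\bar\varphi)=\mathcal M(\bar\varphi\,\psi_X)=\mathcal M(\overline{\bar\varphi})$. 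The two composites thus agree on every $\varphi\in C(X)$, which is precisely the associativity law.

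I do not expect a genuine obstacle here: once the naturality of $\eta$ and $\psi$ is granted, the axioms collapse to the three elementary identities $\bar\varphi\eta_X=\varphi$, $\delta_\mu(\bar\varphi)=\mu(\varphi)$, and $\bar\varphi\,\psi_X=\overline{\bar\varphi}$, each obtained by unwinding the definitions. The only point that demands care is the bookkeeping of the bar operation across the three levels $C(X)$, $C(J(X))$, $C(J^2(X))$, together with the verification that each barred function is continuous and hence an admissible argument; this is exactly what the lemma $\bar\varphi\in C(J(X))$, applied iteratively, supplies.
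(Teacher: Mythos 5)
Your proof is correct, and it takes a genuinely different route from the paper. The paper's proof reduces both the unit laws and the associativity law to max-min measures of finite support, invoking the density of $J_\omega^2(X)$ in $J^2(X)$ and of $J_\omega^3(X)$ in $J^3(X)$ together with the continuity of all maps involved, and then refers to the explicit finite-support computations in \cite[Theorem 4.3]{CRZ}. You instead verify the axioms directly on arbitrary elements by unwinding the definition $\psi_X(M)(\varphi)=M(\bar\varphi)$, reducing everything to the three identities $\bar\varphi\,\eta_X=\varphi$, $\delta_\mu(\bar\varphi)=\mu(\varphi)$, and $\bar\varphi\,\psi_X=\overline{\bar\varphi}$; all of these check out, and the iterated use of the lemma $\bar\varphi\in C(J(X))$ (together with the already-proved continuity of $\psi_X$, which makes $\bar\varphi\,\psi_X$ a legitimate element of $C(J^2(X))$) guarantees that every functional is evaluated on an admissible argument. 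Your approach buys self-containedness and avoids two points the paper leaves implicit: the density of the iterated finite-support sets at levels two and three, and the appeal to an external computation; it also works verbatim for arbitrary compact Hausdorff spaces rather than only metrizable ones. What the paper's density-plus-finite-support route buys in exchange is a uniform template that the authors reuse elsewhere (e.g., for the naturality of $\psi$ and for the $\mathbb J$-algebra computations), where explicit formulas for finite max-min combinations are genuinely needed.
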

\begin{proof}  Since the set $J_\omega^2(X)$ (resp.  $J_\omega^3(X)$) is dense in $J(X)$ (resp. $J^3(X)$), it is sufficient to verify the two-side unit property   $\psi_X T(\eta_X)(\mu)=\psi_X \eta_{T(X)}(\mu)=\mu$ for $\mu\in J_\omega^2(X)$, and the associativity property $\psi_X\psi_{T(X)}(\mu)=\psi_X T(\psi_X)(\mu)$ for $\mu\in J_\omega^3(X)$. Then we use the proof of \cite[Theorem 4.3]{CRZ} to complete the proof.
\end{proof}

The monad $\mathbb I=(I,\eta,\zeta)$ is defined in \cite{Z}. Note that $\eta_X(x)=\delta_x$, for $x\in X$. As for $\zeta_X$, in the sequel we will only need to know that $\zeta_X(\vee_{i=1}^n (\beta_i+\delta_{\mu_i}))= \vee_{i=1}^n (\beta_i+\mu_i)$, where $\beta_i\in[-\infty,0]$ and $\mu_i\in I(X)$, $i=1,\dots,n$ (actually, this property uniquely determines the map $\zeta_X$; see \cite{Z} for details).

\begin{thm} The monads $\mathbb J$ and $\mathbb I$ are not isomorphic.
\end{thm}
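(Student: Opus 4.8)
The plan is to suppose, for contradiction, that an isomorphism of monads $k\colon \mathbb J\to\mathbb I$ exists and to extract a structural constraint from it that contradicts the very different ways the two multiplications combine scalars. Recall that an isomorphism of monads is in particular an isomorphism of functors $k=(k_X)\colon J\to I$ commuting with the units and multiplications. The two functors \emph{are} isomorphic (Proposition~\ref{p:iso}), so the obstruction must come entirely from the multiplication. The key algebraic difference is visible already on two-point pieces: in $\mathbb J$ the scalars combine by $\wedge$ (meet), governed by $\psi_X(\vee_i\alpha_i\wedge\delta_{\mu_i})=\vee_i\alpha_i\wedge\mu_i$, whereas in $\mathbb I$ they combine additively, $\zeta_X(\vee_i(\beta_i+\delta_{\mu_i}))=\vee_i(\beta_i+\mu_i)$. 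My strategy is to show that no natural isomorphism $k$ can simultaneously respect both $\eta$ and these two incompatible combination laws.

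First I would pin down $k$ on the small, dense, finitely-supported measures, where both monads are completely explicit. Because $k$ is a natural transformation commuting with $\eta$, we have $k_X(\delta_x)=\delta_x$ for every $x$ (the unit condition $k\eta=\eta'$ forces this on Dirac measures). Next I would analyze how $k_X$ acts on a measure of the form $\lambda\wedge\delta_{x}\vee\delta_{y}$ (with $y$ playing the role of the ``$\infty$-weight'' point): using naturality with respect to maps into a two-point space, the action of $k$ on such measures is determined by a single increasing bijection $\theta\colon[-\infty,\infty]\to[-\infty,\infty]$ relating the max-min weight $\lambda$ to the max-plus weight, i.e.\ $k_X(\lambda\wedge\delta_x\vee\delta_y)=\theta(\lambda)+\delta_x\ \oplus\ \delta_y$ in the max-plus notation. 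Naturality forces $\theta$ to be the same function for all spaces and all choices of points.

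The crux is then to feed a \emph{two-level} measure through the compatibility square $k\psi=\zeta\, k_{I}\, J(k)$ and derive a functional equation for $\theta$ that has no solution. Concretely, take $M\in J^2_\omega(X)$ built from weights $\alpha_i$ sitting over measures $\mu_i$ that are themselves weighted Diracs with internal weights $\lambda_{ij}$. Computing $k_X\psi_X(M)$ uses the meet-law $\psi_X(\vee\alpha_i\wedge\delta_{\mu_i})=\vee\alpha_i\wedge\mu_i$ followed by one application of $\theta$, so the two nested families of weights are first \emph{combined by} $\wedge$ and only then transported by $\theta$. Computing the other route $\zeta_X k_{I(X)}J(k_X)(M)$ first applies $\theta$ to each internal and external weight, transporting into the additive world, and only then combines them by $+$ via $\zeta_X$. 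Equality of the two routes forces an identity of the shape $\theta(\alpha\wedge\lambda)=\theta(\alpha)+\theta(\lambda)$ (up to the order-reversing reindexing coming from $\ln$ in $g_X$), for all $\alpha,\lambda$ in an interval. I expect this is the main obstacle and the heart of the argument: one must choose the witnessing measures $M$ so that the two sides genuinely exercise the difference between $\wedge$ and $+$, rather than collapsing to a tautology.

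Finally I would derive the contradiction from this functional equation. An increasing bijection $\theta$ satisfying $\theta(\alpha\wedge\lambda)=\theta(\alpha)+\theta(\lambda)$ on a nondegenerate interval is impossible: fixing $\lambda$ and varying $\alpha$ over values exceeding $\lambda$ makes the left side constantly $\theta(\lambda)$ while the right side $\theta(\alpha)+\theta(\lambda)$ strictly increases with $\alpha$, so $\theta$ would have to be constant, contradicting injectivity. Hence no monad isomorphism $k$ can exist, and $\mathbb J$ and $\mathbb I$ are not isomorphic. The work to be careful about is bookkeeping the order reversal introduced by the logarithm in passing between the meet and additive normalizations, and making sure the chosen $M$ lies in $J^2_\omega(X)$ so that both multiplications are given by the explicit finitary formulas; once the functional equation is correctly isolated, the contradiction is immediate.
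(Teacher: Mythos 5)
Your proposal is correct and takes essentially the same route as the paper: the paper likewise invokes the classification (from the max-min measures paper of Cencelj--Repov\v s--Zarichnyi) showing that any functor isomorphism must transport weights by a single order-preserving bijection, and then tests the multiplication-compatibility square on an explicit two-level finitely supported measure over a three-point space, where the mismatch between combining weights by $\wedge$ and by $+$ yields exactly the functional-equation contradiction you describe (in the inverse direction, $\alpha(\beta_1+\beta_2)$ versus $\alpha(\beta_1)\wedge\alpha(\beta_2)$). The only step you treat more lightly than it deserves is the classification itself, but the paper also only cites it rather than reproving it.
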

\begin{proof}
It is shown in \cite{CRZ} that every isomorphism between the functors $I_\omega$ and $J_\omega$ in the category of ultrametric spaces and nonexpanding maps is of a special form. Namely, for every such an isomorphism $k=(k_X)$ there exists an order-preserving bijection $\alpha \colon [-\infty,0]\to[-\infty,\infty]$ such that  $k_X(\vee_{i=1}^n(\lambda_i+\delta_{x_i}))=  \vee_{i=1}^n\alpha(\lambda_i)\wedge\delta_{x_i}   $. The proof of this fact is based on properties of the restrictions of the functors $I$ and $J$ onto the category of spaces of cardinality $\le 3$. Therefore, the corresponding fact is valid in the category $\mathbf{Comp}$.

Next, the calculations from the proof of \cite[Theorem 4.9]{CRZ} demonstrate that the monads $\mathbb J$ and $\mathbb I$ are not isomorphic. For the sake of reader's convenience, we repeat them below.

Let $X = \{a, b, c\}$. Suppose that $M = ((-1)+\delta_\mu)\vee \delta_\nu\in I^2_\omega
(X)$, where $\mu = ((-2)+\delta_a)\vee \delta_b$,
$\nu = ((-3)+\delta_b)\vee\delta_c$.
Then
$$k_X\zeta_X(M) =k_X (((-3)+\delta_a )\vee ((-3) +\delta_b) \vee\delta_c)
=(\alpha(-3) \wedge\delta_a) \vee( \alpha(-3) \wedge\delta_b)\vee \delta_c.$$
On the other hand,
\begin{align*}
&\psi_XJ(k_X )k_{I(X)}
(M) = \psi_XJ(k_X )(\alpha(-1)\wedge \delta_\mu\vee\delta_\nu)\\
=&\psi_X(\alpha(-1) \wedge\delta_{k_X(\mu)} \vee \delta_{k_X(\nu)}) = \psi_X(\alpha(-1) \wedge\delta_{\alpha(-2)\wedge\delta_a\vee\delta_c}
\vee \delta_{\alpha(-3)\wedge\delta_b\vee\delta_c}\\
=&\alpha(-2)\wedge\delta_a\vee \alpha(-3)\wedge\delta_b\vee\delta_c\neq k_X\zeta_X (M).
\end{align*}
\end{proof}

\section{Max-min convexity}

Let $A$ be a subset of $\mathbb R^\tau$. We say that $A$ is max-min convex if, for any $x,y\in A$ and $\lambda\in\mathbb R$, $x\vee(\lambda\wedge y)\in A$ (see, e.g., \cite{NS}). One can easily check that, if $A$ is a max-min convex set, then $\vee_{i=1}^n(\lambda_i\wedge x_i)\in A$ for any $x_1,\dots,x_n\in A$ and $\lambda_1,\dots,\lambda_n\in\mathbb R\cup\{\infty\}$ with $\vee\lambda_i=\infty$ (we express this by saying that every max-min convex set contains all max-min convex combinations of its elements).

Note that, for any $\mu,\nu\in J(X)$ and any $\lambda\in\mathbb R$, one can define $\mu \vee(\lambda\wedge \nu)\colon C(X)\to\mathbb R$ as follows:
$$(\mu \vee(\lambda\wedge \nu))(\varphi)= \mu (\varphi)\vee(\lambda\wedge \nu(\varphi)),\ \varphi\in C(X).$$
One can easily check that $\mu \vee(\lambda\wedge \nu)\in J(X)$. Actually, if one regards $J(X)$ as a subset in $\mathbb R^{C(X)}$, then $\mu \vee(\lambda\wedge \nu)$ is obtained by applying the operations $\vee$ and $\wedge$ coordinatewise. Therefore, we obtain the following
\begin{prop} The set $J(X)$ is a max-min convex subset in $\mathbb R^{C(X)}$.
\end{prop}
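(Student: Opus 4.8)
The plan is to reduce the statement to the single fact that $J(X)$ is closed under the operation $(\mu,\nu,\lambda)\mapsto\mu\vee(\lambda\wedge\nu)$, which is exactly what the definition of max-min convexity demands once we record that this operation agrees coordinatewise with the lattice operations on $\mathbb R^{C(X)}$. Concretely, under the identification of $J(X)$ with $\iota(J(X))\subset\prod_{\varphi\in C(X)}\mathbb R_\varphi$ furnished by Proposition~\ref{p:iota}, the point $\iota(\mu)\vee(\lambda\wedge\iota(\nu))$ computed coordinate by coordinate has $\varphi$-th coordinate $\mu(\varphi)\vee(\lambda\wedge\nu(\varphi))$, which is precisely $\iota(\mu\vee(\lambda\wedge\nu))$. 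Hence it suffices to show $\mu\vee(\lambda\wedge\nu)\in J(X)$ for all $\mu,\nu\in J(X)$ and $\lambda\in\mathbb R$.

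First I would set $\rho=\mu\vee(\lambda\wedge\nu)$ and verify the three defining conditions directly from the formula $\rho(\varphi)=\mu(\varphi)\vee(\lambda\wedge\nu(\varphi))$. For condition (1), since $\lambda\wedge c\le c$, one has $\rho(c_X)=c\vee(\lambda\wedge c)=c$. For conditions (2) and (3), the key ingredient is that $(\mathbb R,\vee,\wedge)$ is a distributive lattice, so that $\wedge$ distributes over $\vee$ and both operations are commutative and associative. Using this, $\rho(\varphi\vee\psi)=(\mu(\varphi)\vee\mu(\psi))\vee\bigl((\lambda\wedge\nu(\varphi))\vee(\lambda\wedge\nu(\psi))\bigr)$ rearranges to $\rho(\varphi)\vee\rho(\psi)$, and likewise $\rho(c\wedge\varphi)=(c\wedge\mu(\varphi))\vee(c\wedge\lambda\wedge\nu(\varphi))=c\wedge\rho(\varphi)$.

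Finally I would combine the two observations: closure of $J(X)$ under $\mu\vee(\lambda\wedge\nu)$ together with the coordinatewise identification shows that whenever $x,y\in J(X)$ and $\lambda\in\mathbb R$, the combination $x\vee(\lambda\wedge y)$ again lies in $J(X)$, which is exactly the defining property of a max-min convex subset of $\mathbb R^{C(X)}$.

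As for difficulty, there is no genuine obstacle here; the statement is essentially a corollary of the remarks immediately preceding it. The only point requiring a moment's care is making explicit that the pointwise-on-$C(X)$ operation defining $\mu\vee(\lambda\wedge\nu)$ coincides with the coordinatewise lattice operation on $\mathbb R^{C(X)}$ under the embedding $\iota$, so that membership of $\mu\vee(\lambda\wedge\nu)$ in $J(X)$ is literally the closure condition in the definition of max-min convexity; the distributivity computations underlying conditions (2) and (3) are routine.
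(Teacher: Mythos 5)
Your proposal is correct and follows exactly the paper's (largely implicit) argument: the paper simply remarks that $\mu\vee(\lambda\wedge\nu)$, defined pointwise on $C(X)$, is the coordinatewise lattice operation under the embedding into $\mathbb R^{C(X)}$ and that "one can easily check" it lies in $J(X)$, which is precisely the distributivity verification you carry out. No differences in approach; you have merely supplied the routine details the paper omits.
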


Now, let $A$ be a compact subset of $\mathbb R^\tau$. We are going to define a map $\xi\colon J(A)\to \mathbb R^\tau$ as follows. For any $\alpha<\tau$, let $p_\alpha\colon A\to\mathbb R$ denote the projection onto the $\alpha$-th coordinate, i.e. $p_\alpha((x_\beta)_{\beta<\tau})=x_\alpha$ for every $(x_\beta)_{\beta<\tau}\in A$. Then $p_\alpha\in C(A)$, $\alpha<\tau$, and we let $\xi(\mu)=(\mu(p_\alpha))_{\alpha<\tau}$, $\mu\in J(X)$. Clearly, $\xi$ is continuous and preserves the max-min convex combinations.

\begin{prop}\label{p:xi} If $A\subset \mathbb R^\tau$ is a compact max-min convex set, then $\xi(J(A))\subset A$.
\end{prop}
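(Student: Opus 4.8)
The plan is to reduce everything to finitely supported max-min measures, where $\xi$ can be computed explicitly, and then to pass to the general case by density and the closedness of $A$.

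First I would record two elementary computations. For a Dirac measure $\delta_x$ with $x=(x_\beta)_{\beta<\tau}\in A$ we have $\delta_x(p_\alpha)=p_\alpha(x)=x_\alpha$, so $\xi(\delta_x)=x$; thus $\xi$ carries Dirac measures back to their points. Moreover, as already noted, $\xi$ preserves max-min convex combinations: for $\mu,\nu\in J(A)$ and $\lambda\in\mathbb R$ the identity $(\mu\vee(\lambda\wedge\nu))(p_\alpha)=\mu(p_\alpha)\vee(\lambda\wedge\nu(p_\alpha))$ holds in every coordinate $\alpha<\tau$, whence $\xi(\mu\vee(\lambda\wedge\nu))=\xi(\mu)\vee(\lambda\wedge\xi(\nu))$ with the operations taken coordinatewise in $\mathbb R^\tau$.

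Next I would treat a finitely supported measure $\mu\in J_\omega(A)$. By the finite representation proposition, $\mu=\vee_{i=1}^n\lambda_i\wedge\delta_{x_i}$ for suitable $x_1,\dots,x_n\in A$ and $\lambda_1,\dots,\lambda_n\in\mathbb R\cup\{\infty\}$ with $\vee_{i=1}^n\lambda_i=\infty$. Applying the two computations above,
$$\xi(\mu)=\xi\Big(\vee_{i=1}^n\lambda_i\wedge\delta_{x_i}\Big)=\vee_{i=1}^n\lambda_i\wedge\xi(\delta_{x_i})=\vee_{i=1}^n\lambda_i\wedge x_i,$$
which is a max-min convex combination of the points $x_1,\dots,x_n\in A$. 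Since $A$ is max-min convex and therefore contains all max-min convex combinations of its elements, we conclude $\xi(\mu)\in A$. Hence $\xi(J_\omega(A))\subset A$.

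Finally I would remove the finiteness assumption. The set $J_\omega(A)$ is dense in $J(A)$, the map $\xi$ is continuous, and $A$ is compact, hence closed in $\mathbb R^\tau$. Therefore $\xi(J(A))=\xi(\mathrm{Cl}\,J_\omega(A))\subset \mathrm{Cl}\,\xi(J_\omega(A))\subset\mathrm{Cl}\,A=A$, which is the desired inclusion. I do not expect a genuine obstacle here; the only point requiring a little care is the bookkeeping in the finitely supported case, in particular making sure that the coefficient $\infty$ (which must occur since $\vee_{i=1}^n\lambda_i=\infty$) is handled by the convention $\infty\wedge x_i=x_i$, so that $\vee_{i=1}^n\lambda_i\wedge x_i$ is a legitimate max-min convex combination to which the convexity of $A$ applies.
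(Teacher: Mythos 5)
Your proof is correct and follows essentially the same route as the paper: evaluate $\xi$ on Dirac measures, use preservation of max-min convex combinations together with the convexity of $A$ to handle $J_\omega(A)$, then conclude by density of $J_\omega(A)$, continuity of $\xi$, and closedness of $A$. You merely spell out the bookkeeping (the finite representation and the closure chain) more explicitly than the paper does.
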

\begin{proof} First, note that, for any $x=(x_\alpha)_{\alpha<\tau}\in A$, $\delta_x(p_\alpha)=x_\alpha$ and therefore  $\xi(\delta_x)=x$. Since  $\xi$  preserves the max-min convex combinations, we conclude that $\xi(\mu) \in A$, for every $\mu\in J_\omega(A)$. Now, since $\xi$
is continuous and $J_\omega(A)$ is dense in $J(A)$, the assertion follows.
\end{proof}

The map $\xi$ is called the {\it max-min barycenter map}.

Given a monad $\mathbb T=(T,\eta,\mu)$ on a category $\mathcal C$, we say that a pair $(X,\xi)$ is a $\mathbb T$-algebra if $X$ is an object of $\mathcal C$ and $\xi\colon T(X)\to X$ is a morphism in $\mathcal C$ satisfying $\xi\eta_X=1_X$ and $\xi\mu_X=\xi T(\xi)$ (see \cite{BW}).

\begin{prop} For any compact subset $A\subset \mathbb R^\tau$, the pair $(A,\xi)$, where $\xi\colon J(A)\to A$ is the max-min barycenter map, is a $\mathbb J$-algebra.
\end{prop}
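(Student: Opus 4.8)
The plan is to verify the two defining identities of a $\mathbb{J}$-algebra for the pair $(A,\xi)$: the unit law $\xi\eta_A=1_A$ and the associativity law $\xi\psi_A=\xi J(\xi)$. Before that, I would record that $\xi$ is a bona fide morphism $J(A)\to A$ in $\mathbf{Comp}$: its values lie in $A$ by Proposition \ref{p:xi} (this is exactly where the max-min convexity of $A$ enters, and is what makes $\xi\colon J(A)\to A$ meaningful), and its continuity was noted when $\xi$ was introduced.

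For the unit law I would argue by a direct coordinatewise computation. Since $\eta_A(x)=\delta_x$ and $\delta_x(p_\alpha)=p_\alpha(x)=x_\alpha$ for every coordinate $\alpha<\tau$, we obtain $\xi(\eta_A(x))=(\delta_x(p_\alpha))_{\alpha<\tau}=(x_\alpha)_{\alpha<\tau}=x$; hence $\xi\eta_A=1_A$. This is precisely the first observation made in the proof of Proposition \ref{p:xi}.

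The associativity law is the substance of the argument, and I would again check equality coordinatewise in $\mathbb{R}^\tau$, using that $p_\alpha(\xi(\nu))=\nu(p_\alpha)$ for every $\nu\in J(A)$. Fix $M\in J^2(A)$ and a coordinate $\alpha<\tau$. Unwinding the definitions of $\psi_A$ and of $J(\xi)$ gives, on the one side, $p_\alpha\bigl(\xi(\psi_A(M))\bigr)=\psi_A(M)(p_\alpha)=M(\overline{p_\alpha})$, where $\overline{p_\alpha}\in C(J(A))$ is the function $\mu\mapsto\mu(p_\alpha)$; and on the other side $p_\alpha\bigl(\xi(J(\xi)(M))\bigr)=J(\xi)(M)(p_\alpha)=M(p_\alpha\circ\xi)$. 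The crucial point is the identity $\overline{p_\alpha}=p_\alpha\circ\xi$ in $C(J(A))$, which is immediate from the definition of $\xi$: for every $\mu\in J(A)$ one has $(p_\alpha\circ\xi)(\mu)=p_\alpha(\xi(\mu))=\mu(p_\alpha)=\overline{p_\alpha}(\mu)$. Thus both sides are $M$ evaluated at the \emph{same} continuous function, so the $\alpha$-th coordinates agree; as $\alpha$ was arbitrary, $\xi\psi_A=\xi J(\xi)$.

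I do not anticipate a genuine obstacle: the whole proof rests on the tautological identity $\overline{p_\alpha}=p_\alpha\circ\xi$ together with the formulas $\psi_A(M)(\varphi)=M(\bar\varphi)$ and $J(\xi)(M)(\varphi)=M(\varphi\circ\xi)$. The only matters demanding a little care are bookkeeping: that $\overline{p_\alpha}$ and $p_\alpha\circ\xi$ really are continuous functions on $J(A)$ (so that $M$ may legitimately be applied to them), which follows from the lemma $\bar\varphi\in C(J(A))$ and the continuity of $\xi$; and the standing requirement that $A$ be max-min convex, without which $\xi$ would not even be a morphism into $A$.
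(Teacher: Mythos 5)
Your proof is correct, but it takes a genuinely different route from the paper's. The paper verifies the associativity law only on the dense subset $J_\omega(J(A))$ of $J^2(A)$: for $M=\vee_{i=1}^n(\alpha_i\wedge\delta_{\mu_i})$ it computes $\xi J(\xi)(M)=\vee_{i=1}^n(\alpha_i\wedge\xi(\mu_i))=\xi\psi_A(M)$, using that $\xi$ preserves max-min convex combinations and that $\xi(\delta_x)=x$, and then concludes for general $M$ by density and continuity. You instead check $\xi\psi_A=\xi J(\xi)$ for an arbitrary $M\in J^2(A)$ coordinatewise, observing that both sides evaluate $M$ at the very same element $\overline{p_\alpha}=p_\alpha\circ\xi$ of $C(J(A))$. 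Your route is more elementary and in a sense sharper: it bypasses the density of finite-support measures and the continuity considerations implicit in passing to limits, at the cost of being tailored to the barycenter map on a subset of $\mathbb R^\tau$ --- it exploits the fact that the coordinates of $\xi(\mu)$ are literally the values $\mu(p_\alpha)$, which collapses the associativity law to a tautology. The paper's density argument is the generic template used throughout for identities involving $\psi$ and does not depend on this special feature of $\xi$. The unit law is handled identically in both. Finally, you are right to flag that max-min convexity of $A$ is needed (via Proposition \ref{p:xi}) for $\xi$ to land in $A$ at all; the statement as printed says only ``compact subset,'' but the hypothesis is clearly intended.
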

\begin{proof} We have already remarked (see the proof of Proposition \ref{p:xi}) that $\xi\delta_x=x$, for every $x\in A$.

Now, let $M\in J_\omega(J(X))$, $M=\vee_{i=1}^n(\alpha_i\wedge\delta_{\mu_i})$, where $\mu_1,\dots,\mu_n\in J (X)$, $\alpha_1,\dots,\alpha_n\in\mathbb R\cup\{\infty\}$ and $\vee_{i=1}^n\alpha_i=\infty$.

Then $$ \xi J(\xi)(M)=\xi(\vee_{i=1}^n(\alpha_i\wedge\delta_{\xi(\mu_i)}))=\vee_{i=1}^n(\alpha_i\wedge\xi(\mu_i))=\xi(\psi_X(M)).$$
Since the subset $J_\omega(J(X))$ is dense  in the space $J^2(X)$, we conclude that $(A,\xi)$ is a $\mathbb J$-algebra.
\end{proof}

\section{Remarks and open questions}

For any metric space $(X,d)$, the homeomorphism $h_X$ allows for metrization of the space $J(X)$ (as well as of $I(X)$; note that another metrization of $I(X)$ is considered in \cite{BRZ}). We will consider in details this metrization and its applications, in particular, to fractal geometry, in a separate publication.

 In \cite{Z} it is conjectured  (and proved in some cases) that the $\mathbb I$-algebras can be identified with the max-plus convex sets. Analogously, we conjecture that the $\mathbb J$-algebras can be naturally identified with the max-min convex sets.

 Recently, T. Radul \cite{Ra} considered the max-plus counterpart of the barycenter map. He characterized the compact max-plus convex sets for which the max-plus barycenter map is open. For the max-min convex sets the corresponding question remains unsolved.

\end{document}